\title{Koszulness of Enveloping Algebras Associated to Generalized Yang-Baxter Equations}
\date{\today}
\author{Robert Laugwitz}
\address{Mathematical Institute, University of Oxford, Andrew Wiles Building, Radcliffe Observatory Quarter, Woodstock Road, Oxford, OX2 6GG}
\email{laugwitz@maths.ox.ac.uk}
\urladdr{http://www.maths.ox.ac.uk/people/profiles/robert.laugwitz}
\def\imod#1{\allowbreak\mkern10mu({\operator@font mod}\,\,#1)}
\newcommand{\bigslant}[2]{{\raisebox{.2em}{$#1$}\left/\raisebox{-.2em}{$#2$}\right.}}
\newcommand{\superimpose}[2]{%
  {\ooalign{$#1\@firstoftwo#2$\cr\hfil$#1\@secondoftwo#2$\hfil\cr}}}
\newcommand{\hooklongrightarrow}{\lhook\joinrel\longrightarrow}
\newcommand{\twoheadlongrightarrow}{\relbar\joinrel\twoheadrightarrow}
\newcommand{\leftexpsub}[3]{{\vphantom{#3}}^{#1}_{#2}{#3}}
\newcommand{\lmod}[1]{#1\text{-}\mathbf{Mod}}
\newcommand{\un}[1]{\underline{#1}}
\newcommand{\uun}[1]{\underline{\underline{#1}}}
\newcommand{\ov}[1]{\overline{#1}}
\newcommand{\oov}[1]{\overline{\overline{#1}}}
\newcommand{\Drin}{\operatorname{Drin}}
\newcommand{\End}{\operatorname{End}}
\newcommand{\Ext}{\operatorname{Ext}}
\newcommand{\GL}{\operatorname{GL}}
\newcommand{\Heis}{\operatorname{Heis}}
\newcommand{\Hom}{\operatorname{Hom}}
\newcommand{\ide}{\operatorname{Id}}
\newcommand{\Ind}{\operatorname{Ind}}
\providecommand{\fr}[1]{\mathfrak{#1}}
\providecommand{\op}[1]{\operatorname{#1}}
\newcommand{\mC}{\mathds{C}}
\newcommand{\mA}{\mathds{A}}
\newcommand{\cB}{\mathcal{B}}
\newcommand{\cE}{\mathcal{E}}
\newcommand{\cL}{\mathcal{L}}
\newcommand{\cT}{\mathcal{T}}
\newcommand{\cS}{\mathcal{S}}
\newtheoremstyle{mystyle}
  {0.5cm}                   
  {0.5cm}                   
  {\normalfont}           
  {}                      
  {\itfont\bfseries}  
  {:}                     
  {0.3cm}              
  {\thmname{#1}}
\newtheoremstyle{defstyle}
  {0.5cm}                   
  {0.5cm}                   
  {\normalfont}           
  {}     
  {\normalfont\bfseries}  
  {:}                     
  {0.3cm}              
  {\thmname{#1}\thmnumber{ #2}\thmnote{ (#3)}}
\numberwithin{equation}{section}
\newtheorem{theorem}{Theorem}[subsection]
\newtheorem{proposition}[theorem]{Proposition}
\newtheorem{corollary}[theorem]{Corollary}
\newtheorem{lemma}[theorem]{Lemma}
\theoremstyle{definition}
\newtheorem{definition}[theorem]{Definition}
\newtheorem{algorithm}[theorem]{Algorithm}
\theoremstyle{remark}
\newtheorem{example}[theorem]{Example}
\newtheorem{remark}[theorem]{Remark}
\renewcommand{\sectionmark}[1]		
	{
	\markboth{\small\it \thesection{} #1}{}
	}
\begin{document}

\begin{abstract}
The universal enveloping algebra $U(\fr{tr}_n)$ of a Lie algebra associated to the classical Yang-Baxter equation was introduced in \cite{BEER} where it was shown to be Koszul. This algebra appears as the $A_{n-1}$ case in a general class of braided Hopf algebras in \cite{BB} for any complex reflection group. In this paper, we show that the algebras corresponding to the series $B_n$ and $D_n$, which are again universal enveloping algebras, are Koszul. We further show how results of \cite{BB} can be used to produce pairs of adjoint functors between categories of rational Cherednik algebra representations of different rank and type for the classical series of Coxeter groups.
\end{abstract}

\subjclass[2010]{16S37; 16S30}
\keywords{Koszul algebras, Universal enveloping algebras, Complex reflection algebras, Yang-Baxter equations, Rational Cherednik algebras, Heisenberg doubles}

\maketitle



\section{Introduction}

\subsection{Motivation}

The Yang-Baxter equations 
\begin{align}
R_{12}R_{13}R_{23}&=R_{23}R_{13}R_{12}
\end{align}
play a major role in the study of integrable system which are of importance in quantum field theory and statistical mechanics (see e.g. \cite{Jim}). The \emph{classical} Yang-Baxter equations
\begin{align}
[r_{12},r_{13}]+[r_{12},r_{23}]+[r_{13},r_{23}]&=0,
\end{align}
can be obtained as a classical limit (see \emph{loc.cit.}, Section~3) of these equations. There exist natural generalizations to $n$ indices
\begin{align}\label{classicalybe}
[r_{ij},r_{ik}]+[r_{ij},r_{jk}]+[r_{ik},r_{jk}]&=0,
\end{align}
for $1\leq i<j<k\leq n$. In \cite{BEER}, the Lie algebra $\fr{tr}_n$ with generators $r_{ij}$ subject to the relations (\ref{classicalybe}) and $[r_{ij},r_{kl}]=0$ for four distinct indices is considered. The corresponding discrete group such that the associated Malcev Lie algebras are $\fr{tr}_n$ are also studied in \cite{BEER}. These groups are the pure flat braid groups of \cite{Lee}.

It was shown in \cite{BEER} that the universal enveloping algebras $U(\fr{tr}_n)$ are Koszul. This result is in analogy with the Koszulness of the Drinfeld-Kohno Lie algebras, which have a similar presentation, with generators $r_{ij}$, and relations $[r_{ij},r_{kl}]=0$, $[r_{ij},r_{ik}]+[r_{ij},r_{jk}]=0$ (see e.g. \cite[3.10]{EHKR}).

In \cite{BB}, the algebra $U(\fr{tr}_n)$ is reinterpreted as a certain quadratic cover\footnote{The minimal quadratic cover of $\cB(Y_G)$ is the algebra $\cE_n$ of \cite{FK}. It is a quotient of $U(\fr{tr}_n)$.} of the Nichols algebra $\cB(Y_G)$ of the Yetter-Drinfeld module $Y_G$ which has a basis parametrized by reflections $(ij)\in S_n$. The module structure is (up to a cocycle) given by the conjugation action on reflections and the grading is given by the corresponding group element $(ij)$ of the generator. This description gives natural generalizations of $U(\fr{tr}_n)$ for any complex reflection group $G$ (see Section~\ref{sect1} for a brief summary of their construction). These algebras are in particular braided Hopf algebras in the category of YD-modules over $G$ and can be viewed as the universal enveloping algebras of certain Lie algebras, which are described in terms of generators and relations (see  Section \ref{generalizedbeer}). For explicit presentations in the cases of the classical series $D_n$ and $B_n$, see Section \ref{dncase}, \ref{bncase}. We will refer to these generalizations as \emph{BEER-algebras}.

The main reason for our interest in the algebras $U(\fr{yb}_G)$ lies in their applications to \emph{rational Cherednik algebras}. The rational Cherednik algebras $H_{t,c}(G)$ were introduced in \cite{EG} and are deformations of the algebra $\mC[T^*(V)]\rtimes kG$ for $t=0$, and $D(V)\rtimes kG$ for $t\neq 0$. The algebras $H_{t,c}(G)$ and their symplectic generalizations have since been at the core of an active area of current research (see e.g. \cite{EG,Rou,GS} and many more). One important tool in the study of these algebras is the \emph{Dunkl embedding}

\begin{minipage}{0.5\textwidth}
\begin{align*}
\Theta_c\colon H_{0,c}(G)&\hooklongrightarrow \mC[\fr{h}^*\times \fr{h}^{\op{reg}}]\rtimes G,&
\end{align*}
\end{minipage}
\begin{minipage}{0.5\textwidth}
\begin{align*}
\Theta_c\colon H_{1,c}(G)&\hooklongrightarrow D(\fr{h}^{\op{reg}})\rtimes G.&
\end{align*}
\end{minipage}

\noindent In \cite{BB} different embeddings were introduced, namely
\begin{align*}
M_c\colon H_{0,c}(G)&\hooklongrightarrow \Heis_{\mC G}(U(\fr{yb}_G)),&\\
M_c\colon H_{t,c}(G)&\hooklongrightarrow \Heis_{\mC G}(U(\fr{yb}_G))_{t'}.&
\end{align*}
The analogy is that the ring of differential operators is the Heisenberg double\footnote{The subscript $t'$ denotes a certain deformation parameter (cf. \cite{BB}).} of the ring of regular functors (for $\mA^n$). In some sense, the embedding into the braided Heisenberg double is more symmetric than the Dunkl embedding, which is the identity on $\fr{h}$ while it maps $y\in \fr{h}^*$ to the corresponding Dunkl operator $D_y$. In the  embeddings of \cite{BB}, elements of both $\fr{h}$ and its dual are mapped to similar expressions which are summations over all reflections $s\in \cS$. More concretely, in the case $t=0$,
\begin{align*}
D_y=\sum_{s\in \cS}{c_s(y,\alpha_s)\frac{1-s}{\alpha_s}}, \qquad M_c(x)=\sum_{s\in \cS}{c_s(\alpha_s^*,x)\un{s}},\qquad M_c(y)=\sum_{s\in \cS}{(y,\alpha_s)\ov{s}}.
\end{align*}

In this paper, we want to demonstrate one possible application of the embedding $M_c$. Namely, the construction of induction (right exact) and restriction (left exact) functors of modules over rational Cherednik algebras between different types (for the classical series $A_n$, $B_n$ and $C_n$). We focus on the case $t=0$, although similar constructions can be done in the case $t\neq 0$ as well.

\subsection{Summary}

The paper starts with a brief review of the construction of the generalizations of the algebras $U(\fr{tr}_n)$ due to \cite{BB} in Section~\ref{sect1}. We give an interpretation as universal enveloping algebras of a certain Lie algebra $\fr{yb}_G$ and explicit descriptions of the examples corresponding to the $D_n$ and $B_n$-series. We will also denote the corresponding Weyl groups by $A_{n-1}=S_n$, $B_n$, $D_n$.

In Section \ref{sect3}, we prove that the algebras $U(\fr{yb}_{G_n})$ are Koszul\footnote{It was pointed out by Y. Bazlov that Koszulness of these algebras has been conjectured by A.N. Kirillov in 2006 based on computational evidence.} for $G_n = D_n$ and $B_n$. This is done by explicitly constructing a PBW basis of the corresponding quadratic dual in both cases separately.

In the final Section \ref{sect2}, we consider maps between the braided Heisenberg doubles of $U(\fr{yb}_G)$. First, there are injective algebra morphisms
\begin{align*}
\phi_A^D\colon \Heis_{\mC S_n}(U(\fr{tr}_{n}))&\hooklongrightarrow \Heis_{\mC D_n}(U(\fr{yb}_{D_n})),&\\
\phi_A^B\colon \Heis_{\mC S_n}(U(\fr{tr}_{n}))&\hooklongrightarrow \Heis_{\mC B_n}(U(\fr{yb}_{B_n})),&
\end{align*}
but also injective algebra morphisms
\begin{align*}
\tau_n^{n}\colon\Heis_{\mC G_n}(U(\fr{yb}_{G_n}))&\hooklongrightarrow \Heis_{\mC G_{n}}(U(\fr{yb}_{G_{n}})),&
\end{align*}
where $G$ can be replaced by either $A$, $D$ or $B$.
These morphisms can be combined with the result of \cite{BB} that the rational Cherednik algebras $H_{0,c}(G)$ map to the braided Heisenberg doubles of $U(\fr{yb}_G)$. Hence we can use the diagrams\footnote{Diagrams are created using the package available at \url{http://www.paultaylor.eu/diagrams/}.}

\noindent\begin{minipage}{0.5\linewidth}
\begin{diagram}
&&\Heis_{\mC G_n}(U(\fr{yb}_{G_n}))&&\\
&\ruTo_{M_c}&&\luTo_{M_c}&\\
H_{0,c}(S_n)&&&&H_{0,c}(G_n).
\end{diagram}
\end{minipage}
\begin{minipage}{0.5\linewidth}
\begin{diagram}
&&\Heis_{\mC G_{n+1}}(U(\fr{yb}_{G_{n+1}}))&&\\
&\ruTo_{M_c}&&\luTo_{M_c}&\\
H_{0,c}(G_n)&&&&H_{0,c}(G_{n+1}),
\end{diagram}
\end{minipage}
\vspace{5pt}

\noindent where $G_n=S_n, D_{n}$ or $B_n$, to give the Heisenberg doubles a bimodule structure over different rational Cherednik algebras and use these to define functors 
\begin{align*}
\op{HInd}_{A}^{D}\colon \lmod{H_{0,c}(S_n)}&\longrightarrow \lmod{H_{0,c}(G_n)},&\\
\op{HInd}_n^{n+1}\colon \lmod{H_{0,c}(G_n)}&\longrightarrow \lmod{H_{0,c}(G_{n+1})},&\\
\op{HInd}_{D}^{A}\colon \lmod{H_{0,c}(G_n)}&\longrightarrow \lmod{H_{0,c}(S_n)},&\\
\op{HInd}_{n+1}^{n}\colon \lmod{H_{0,c}(G_{n+1})}&\longrightarrow \lmod{H_{0,c}(G_{n})},&
\end{align*}
as well as the corresponding restriction functors $\op{HRes}$ in the other directions. By the general tensor-Hom-adjunction, the induction and restriction functors form adjoint pairs. Hence all versions of the functors $\op{HInd}$ are right exact.

\subsection{Further Applications}

Another reason for this paper is the role $U(\fr{yb}_G)$ plays in the construction of categorical actions of the braided Drinfeld doubles of $U(\fr{yb}_G)$ over $\Drin(G)$ on the category of representations of the rational Cherednik algebras $H_{t,c}(G)$. The construction of the action uses a special case of a more general phenomenon that the monoidal category of modules over the braided Drinfeld double of a braided Hopf algebra acts on modules over the corresponding braided Heisenberg double (this generalization of an earlier result of \cite{Lu} is the content of \cite{Lau}). In \cite{Lau2}, we combine this general categorical action with the embeddings $M_c$ of \cite{BB} and obtain a categorical action of $\lmod{\Drin_{\Drin(G)}(U(\fr{yb}_G))}$ on $\lmod{H_{t,c}(G)}$.

\subsection{Acknowledgement}

First and foremost, I would like to thank my PhD advisor Dr Kobi Kremnizer for his support and guidance. I also like to thank Dr Yuri Bazlov, Prof Arkady Berenstein, Prof Pavel Etingof and Prof Iain Gordon for helpful conversations on the subject matter, and Dr Yuri Bazlov for helpful feedback on a first draft of this paper.

This research is supported by the EPSRC grant EP/I033343/1 \emph{Motivic Invariants and Categorification}\footnote{\url{http://gow.epsrc.ac.uk/NGBOViewGrant.aspx?GrantRef=EP/I033343/1}}.


\section{BEER-Algebras for Complex Reflection Groups}\label{sect1}

\subsection{Notation}

Let $V$ be a finite-dimensional $\mC$-vector space, $G\leq \GL(V)$ a finitely-generated \emph{irreducible} complex reflection group with finite set of reflections
\begin{align*}
\cS&=\lbrace s\in G\mid \dim_{\mC}(1-s)=1\rbrace.&
\end{align*}
For each $s\in \cS$, $V=\ker(1-s)\oplus \op{im}(1+s)=V_1\oplus V_{\chi}$ as a decomposition of eigenspaces. We can choose vectors $\alpha_s\in V$, $\alpha_s^*\in V^*$ such that
\begin{align}
s(v)&=v-\langle \alpha_s^*,v\rangle \alpha_s, &\forall v\in V.
\end{align}
Here, $\alpha_s^*\otimes \alpha_s$ corresponds to $1-s\in \End(V)$ and is hence independent of choice. As $g\triangleright (1-s)=1-gsg^{-1}$, we have that $g\triangleright \alpha^*_s\otimes \alpha_s=\alpha^*_{gsg^{-1}}\otimes \alpha_{gsg^{-1}}$ and we can, following \cite{BB}, define $\lambda$ to be the function such that
\begin{align}
g\triangleright \alpha_s^*&=\lambda(g,s)\alpha_{gsg^{-1}}^*,&\forall g\in G, s\in \cS.
\end{align}
For a finite Coxeter group, $\lambda(g,s)$ can be chose to have values in $\lbrace \pm 1\rbrace$.

\subsection{Generalized BEER-Algebras}\label{generalizedbeer}

We briefly recall the construction of the quadratic algebras $U(\fr{yb}_G)$ associated to a complex reflection group $G$ from \cite[Chapter~7]{BB}.

\begin{definition}
Let $Y_G$ denote the Yetter-Drinfeld module over $kG$ generated by $\un{s}$, for $s\in \cS$, where
\begin{align}
g\triangleright \un{s}&=\lambda(g,s)\un{gsg^{-1}},&\\
\delta(\un{s})&=s\otimes \un{s}\in kG\otimes Y_G.&
\end{align}
For the braiding $\Psi\colon Y_G\otimes Y_G \to Y_G\otimes Y_G$, we consider the ideals
\begin{align*}
I^{\op{quad}}_G&:=\langle \ker(\ide_{Y_G\otimes Y_G}+\Psi)\rangle\triangleleft T(Y_G),&\\
I_G&:={I^{\Lambda,\op{quad}}_G}:=\langle I^{\op{quad}}\cap \Lambda Y_G\rangle\triangleleft T(Y_G).&
\end{align*}
We consider the following Hopf algebras in the category of YD-modules over $kG$,
\begin{align*}
\cB^{\op{quad}}(Y_G)&:=T(V)/{I^{\op{quad}}_G},&\\
\cB^{\op{quad}}_\Lambda(Y_G)&:=T(V)/{I_G}.&
\end{align*}
The algebra $\cB^{\op{quad}}_\Lambda(Y_G)$  will be referred to as the \emph{BEER-algebra for $G$}.
\end{definition}

Note that the definition does not depend on the choices of $\alpha_s$, $\alpha_s^*$ as different choices will give isomorphic YD-modules (cf. \cite[Remark 7.16]{BB}). Moreover, $\lambda$ can be extended to a $\Drin(G)$-character by setting
\begin{align*}
g\otimes \delta_t\triangleright \un{s} &= \delta_{t,s}\lambda(g,s)\un{gsg^{-1}}.&
\end{align*}
This is equivalent to $\lambda(gh,s)=\lambda(g,hsh^{-1})\lambda(h,s)$.

\begin{remark}
The algebra $\cB^{\op{quad}}(Y_G)$ is a generalization of the algebra $\cE_n$ of \cite{FK} (which is the $A_{n-1}$-case) to arbitrary finite complex reflection groups.
\end{remark}

As for type $A$ in \cite{BEER}, we can consider a description as a universal enveloping algebra of a Lie algebra for the generalized BEER-algebras. Since $I_G\subseteq \Lambda V$, we can consider the quotient of the free Lie algebra $\cL(\cS)$ on generators $\un{s}$, for $s\in \cS$,
\begin{align*}
\fr{yb}_G&:=\bigslant{\cL(\cS)}{\langle I_G \rangle}.&
\end{align*}

\begin{example}$~$
\begin{enumerate}
\item[(i)] The cyclic group with two elements $C_2$ is a complex reflection group. For this group, $U(\fr{yb}_{C_2})=\mC[\un{s}]$, which is a Koszul algebra.
\item[(ii)] More generally, for $G=S_n$, the Lie algebra is $\fr{tr}_n$ from \cite{BEER}, for which $U(\fr{tr}_n)$ was shown to be Koszul.
\end{enumerate}
\end{example}

\begin{lemma}
There is an isomorphism of algebras
\begin{align*}
U(\fr{yb}_G)&\cong \cB^{\op{quad}}_\Lambda(Y_G),&
\end{align*}
for any complex reflection group $G$.
\end{lemma}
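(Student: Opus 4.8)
The plan is to identify both sides with the quotient of the tensor algebra $T(Y_G)$ by the ideal $I_G$ and to leverage the universal properties involved. On the one hand, $\cB^{\op{quad}}_\Lambda(Y_G)$ is by definition $T(Y_G)/I_G$. On the other hand, the universal enveloping algebra of a Lie algebra presented by generators and relations has a standard presentation: since $\fr{yb}_G = \cL(\cS)/\langle I_G\rangle$ is a quotient of the free Lie algebra $\cL(\cS)$ by the Lie ideal generated by $I_G$, the enveloping algebra $U(\fr{yb}_G)$ is the quotient of the tensor algebra $T(Y_G) = U(\cL(\cS))$ by the associative two-sided ideal generated by the same set $I_G$. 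The content of the lemma is then precisely that these two ideals of $T(Y_G)$ coincide, i.e. that the associative ideal generated by the degree-two subspace $I^{\op{quad}}\cap\Lambda Y_G$ agrees with whatever ideal is implicit in the definition of $\cB^{\op{quad}}_\Lambda(Y_G)$ — but since $I_G$ is defined exactly as the associative two-sided ideal $\langle I^{\op{quad}}\cap\Lambda Y_G\rangle \triangleleft T(Y_G)$, the two presentations use the literally same ideal, and the lemma reduces to the well-known fact that $U$ of a Lie algebra given by generators and relations is the associative algebra given by the same generators and relations.

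Concretely, first I would recall that $U(\cL(X)) \cong T(kX)$ for a free Lie algebra on a set $X$, which follows from the PBW theorem together with the universal properties of $\cL(X)$ and $T(kX)$. Here the relevant twist is that everything lives in the braided monoidal category $\HYD$ of Yetter–Drinfeld modules over $kG$: $Y_G$ is a YD-module, $\cL(\cS)$ is the free Lie algebra object generated by $Y_G$ inside this braided category (using the braided Lie bracket), and $T(Y_G)$ is its enveloping algebra. One must therefore invoke a braided PBW theorem valid in a symmetric (or suitably symmetric-on-the-relevant-objects) setting; since $\lambda$ takes values in $\{\pm 1\}$ for Coxeter groups, the braiding on the subspace of $Y_G$ generated by the $\un{s}$ is a signed permutation, and one can work in the resulting symmetric subcategory, where the classical PBW argument applies verbatim.

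Then I would carry out the quotient step: if $\fr{a} = \cL(\cS)/\fr{r}$ where $\fr{r} = \langle I_G\rangle_{\mathrm{Lie}}$ is the Lie ideal generated by $I_G \subseteq \Lambda^2 Y_G \subseteq \cL(\cS)$, then $U(\fr{a}) = U(\cL(\cS))/\langle \fr{r}\rangle_{\mathrm{assoc}} = T(Y_G)/\langle I_G\rangle_{\mathrm{assoc}} = T(Y_G)/I_G = \cB^{\op{quad}}_\Lambda(Y_G)$, where the middle equality uses that the associative ideal generated by a Lie ideal $\fr{r}$ equals the associative ideal generated by any Lie-generating set of $\fr{r}$, hence by $I_G$ itself. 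The isomorphism is then the identity on generators, hence visibly compatible with the $\Drin(G)$-action (equivalently the YD-structure), so it is an isomorphism of Hopf algebras in $\HYD$.

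The only genuine subtlety — and the main point to be careful about — is the passage $U(\cL(\cS)) \cong T(Y_G)$ in the braided setting, i.e. making sure the braided/quantum PBW theorem applies to $\cL(\cS)$. I expect this to be the step requiring the most care, but it is standard in the Coxeter case because the braiding restricts to a symmetry (up to sign) on the generating module; alternatively one can simply cite the corresponding statement from \cite{BB}, where the algebras $\cB^{\op{quad}}_\Lambda(Y_G)$ are constructed and the enveloping-algebra interpretation for type $A$ is already identified with $U(\fr{tr}_n)$, the general case being entirely parallel.
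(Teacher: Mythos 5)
Your core argument is the same as the paper's: both sides are identified with $T(Y_G)/I_G$ via the universal properties of quotients of free Lie and free associative algebras, and the paper's proof is exactly this one-line appeal. Two corrections, though. First, the "genuine subtlety" you isolate --- needing a braided PBW theorem for a braided free Lie algebra --- is a misreading of the construction: the paper takes $\cL(\cS)$ to be the \emph{ordinary} free Lie algebra on the generators $\un{s}$, and $\fr{yb}_G$ is an ordinary Lie algebra over $\mC$. This is precisely why the definition intersects $I^{\op{quad}}_G$ with $\Lambda Y_G$: the degree-two relations are antisymmetric in the classical sense, hence expressible in ordinary commutators and so already elements of $\cL(\cS)\subseteq T(Y_G)$. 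The identification $U(\cL(\cS))\cong T(Y_G)$ then needs no PBW at all --- it is the composite-of-left-adjoints argument --- and the quotient step goes through as you describe. Second, your closing claim that the isomorphism is one of Hopf algebras in $\HYD$ is false, and the paper explicitly warns against it: $U(\fr{yb}_G)$ carries the ordinary cocommutative coproduct while $\cB^{\op{quad}}_\Lambda(Y_G)$ carries the braided one, so the map is an isomorphism of algebras only (which is all the lemma asserts). Neither issue affects the validity of the algebra-isomorphism proof itself.
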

\begin{proof}
This can be seen by use of the universal properties of quotients of free Lie (respectively associative) algebras.
\end{proof}

Note that the algebra $U(\fr{yb}_G)$ carries a natural grading, where the generators $\un{s}$ have degree 1 and the relations are in degree 2, so the BEER-algebras are quadratic. Remark that the isomorphisms of $U(\fr{yb}_G)$ and $\cB^{\op{quad}}_\Lambda(Y_G)$ is one of algebras, \emph{not} of Hopf algebras. The coproduct of the latter is defined in the category of YD-modules over $G$. Hence $U(\fr{yb}_G)$ has two different Hopf algebra structures.

\subsection{The \texorpdfstring{$D_n$}{Dn}-Case}\label{dncase}

We will give explicit relations for the algebras $U(\fr{yb}_G)$ in the cases $G=D_n$, $B_n$ for which we will investigate Koszulness in Section~\ref{sect2}, starting with the $D_n$-case. The Weyl group $D_{n}$ of type $D_{n}$ is
\begin{align*}
D_{n}&:=C_2^{n-1}\rtimes S_{n},&
\end{align*}
acting on $\mC^{n}$, with reflections parametrized by the positive roots $e_i\pm e_j$ for $1\leq i<j\leq n$. The corresponding reflections are
\begin{align*}
s_{e_i-e_j}&=(ij)\in S_{n}\leq D_n,&i<j,\\
s_{e_i+e_j}&=s_is_j(ij),&i<j,
\end{align*}
where $s_i$ changes the sign of $e_i$. We denote the corresponding generators of $U(\fr{yb}_{D_{n}})$ by $\un{(ij)}$ for $s_{e_i-e_j}$ and $\un{\un{(ij)}}$ for $s_{e_i+e_j}$, where $1\leq i<j\leq n$. The character $\lambda$ will be most easily described by first remembering the order of the indices in $\sigma\triangleright(ij)=(\sigma(i),\sigma(j))$ and multiplying by $(-1)$ if the order of the indices of the transposition $(\sigma(i),\sigma(j))$ is reversed. That gives the module structure (for a transposition $\sigma=(k,l)$)
\begin{align}\label{dnmodulestructure}
\sigma \triangleright \un{(ij)}
&=\begin{cases}\un{(\sigma(i),\sigma(j))},& \text{if }\sigma(i)<\sigma(j),\\
-\un{(\sigma(j),\sigma(i))},& \text{if }\sigma(i)>\sigma(j),
\end{cases}\\
\sigma \triangleright \uun{(ij)}
&=\begin{cases}\uun{(\sigma(i),\sigma(j))},& \text{if }\sigma(i)<\sigma(j),\\
-\uun{(\sigma(j),\sigma(i))},& \text{if }\sigma(i)>\sigma(j),
\end{cases}\\
s_ks_l\sigma \triangleright \uun{(ij)}
&=\begin{cases}-\uun{(\sigma(i),\sigma(j))},& \text{if }\sigma=(ij),\\
\uun{(\sigma(i),\sigma(j))},& \text{if }\lbrace k,l\rbrace\cap\lbrace i,j\rbrace=\emptyset,\\
\un{(\sigma(i),\sigma(j))},& \text{if }\sigma(i)<\sigma(j) \text{ and precisely one index agrees},\\
-\un{(\sigma(j),\sigma(i))},& \text{if }\sigma(i)>\sigma(j) \text{ and precisely one index agrees},
\end{cases}\\
s_ks_l\sigma \triangleright \un{(ij)}
&=\begin{cases}-\un{(\sigma(i),\sigma(j))},& \text{if }\sigma=(ij),\\
\un{(\sigma(i),\sigma(j))},& \text{if }\lbrace k,l\rbrace\cap\lbrace i,j\rbrace=\emptyset,\\
\uun{(\sigma(i),\sigma(j))},& \text{if }\sigma(i)<\sigma(j) \text{ and precisely one index agrees},\\
-\uun{(\sigma(j),\sigma(i))},& \text{if }\sigma(i)>\sigma(j) \text{ and precisely one index agrees}.
\end{cases}
\end{align}

\begin{lemma}\label{iquaddn}
The ideal $I^{\op{quad}}_{D_{n}}$ is generated by the relations for $\lbrace i, j\rbrace \cap \lbrace k, l\rbrace=\emptyset$,

\noindent\begin{minipage}{0.5\textwidth}
\begin{align}
[\un{(ij)},\un{(kl)}]&=0,&\label{relduo1}\\
[\uun{(ij)},\uun{(kl)}]&=0,&\label{relduo2}\\
[\un{(ij)},\uun{(kl)}]&=0,&\label{relduo3}
\end{align}
\end{minipage}
\begin{minipage}{0.5\textwidth}
\begin{align}
\un{(ij)}^2&=0,&\label{relduo4}\\
\uun{(ij)}^2&=0,&\label{relduo5}\\
\un{(ij)}\uun{(ij)}+\uun{(ij)}\un{(ij)}&=0.\label{relduo6}&
\end{align}
\end{minipage}

\noindent And for $1\leq k<j<l\leq n$,

\noindent\begin{minipage}{0.5\textwidth}
\begin{align}
\un{(jl)}\un{(kj)}&=\un{(kj)}\un{(kl)}+\un{(kl)}\un{(jl)},&\label{reltri1}\\
\un{(kj)}\un{(jl)}&=\un{(jl)}\un{(kl)}+\un{(kl)}\un{(kj)},&\label{reltri2}\\
\uun{(jl)}\uun{(kj)}&=\uun{(kj)}\un{(kl)}+\un{(kl)}\uun{(jl)},&\label{reltri3}\\
\uun{(kj)}\uun{(jl)}&=\un{(jl)}\uun{(kl)}+\uun{(kl)}\un{(kj)}\label{reltri4},
\end{align}
\end{minipage}
\begin{minipage}{0.5\textwidth}
\begin{align}
\uun{(jl)}\un{(kj)}&=\un{(kj)}\uun{(kl)}+\uun{(kl)}\uun{(jl)},\label{reltri5}&\\
\uun{(kj)}\un{(jl)}&=\uun{(jl)}\uun{(kl)}+\uun{(kl)}\un{(kj)},\label{reltri6}&\\
\un{(jl)}\uun{(kj)}&=\uun{(kj)}\uun{(kl)}+\uun{(kl)}\un{(jl)},\label{reltri7}&\\
\uun{(kj)}\un{(jl)}&=\un{(jl)}\uun{(kl)}+\uun{(kl)}\uun{(kj)}.\label{reltri8}&
\end{align}
\end{minipage}
\end{lemma}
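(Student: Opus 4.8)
The ideal $I^{\op{quad}}_{D_n}$ is by definition generated by the subspace $\ker(\ide_{Y_{D_n}\otimes Y_{D_n}}+\Psi)$ of degree-two tensors, where the braiding is $\Psi(\un s\otimes\un t)=(s\triangleright\un t)\otimes\un s$. So the plan is to prove that the degree-two parts of the displayed relations form a basis of $\ker(\ide+\Psi)$; then they generate the same ideal. I would do this by decomposing $Y_{D_n}\otimes Y_{D_n}$ into small $\Psi$-stable blocks and computing the kernel in each one.

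First I would set up the reduction. For a reflection $s$ of $D_n$ let $\operatorname{supp}(s)\subseteq\{1,\dots,n\}$ be the two-element set of indices it moves, so $\operatorname{supp}(s_{e_i-e_j})=\operatorname{supp}(s_{e_i+e_j})=\{i,j\}$. Since every reflection of $D_n$ acts on indices as a transposition, $\operatorname{supp}(s)\cup\operatorname{supp}(sts^{-1})=\operatorname{supp}(s)\cup\operatorname{supp}(t)$ for all reflections $s,t$, and because $\Psi(\un s\otimes\un t)$ is a scalar multiple of $\un{sts^{-1}}\otimes\un s$, the subspaces
\begin{equation*}
Y_{D_n}\otimes Y_{D_n}=\bigoplus_{U} V_U,\qquad V_U:=\operatorname{span}\{\un s\otimes\un t: \operatorname{supp}(s)\cup\operatorname{supp}(t)=U\},
\end{equation*}
indexed by $U$ with $|U|\in\{2,3,4\}$, are $\Psi$-stable. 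Hence $\ker(\ide+\Psi)$ splits as a direct sum over $U$, and it remains to identify the kernel inside each $V_U$; the cases $|U|=2,3,4$ will produce exactly the relations \eqref{relduo4}--\eqref{relduo6}, \eqref{reltri1}--\eqref{reltri8}, \eqref{relduo1}--\eqref{relduo3} respectively.

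The cases $|U|=2$ and $|U|=4$ are short. If $U=\{i,j\}$ then $V_U$ is four-dimensional with basis $\un{(ij)}^{\otimes2}$, $\uun{(ij)}^{\otimes2}$, $\un{(ij)}\otimes\uun{(ij)}$, $\uun{(ij)}\otimes\un{(ij)}$; the module structure \eqref{dnmodulestructure} shows $\Psi$ acts as $-1$ on the first two and as $(-1)$ times the flip on the span of the last two, so $\ker(\ide+\Psi)\cap V_U$ is three-dimensional, spanned precisely by \eqref{relduo4}, \eqref{relduo5}, \eqref{relduo6}. If $|U|=4$, write $U=P\sqcup Q$ with $|P|=|Q|=2$; then $s$ and $t$ have disjoint support, \eqref{dnmodulestructure} gives $s\triangleright\un t=\un t$, so $\Psi$ is the flip on each plane $\mC\,\un s\otimes\un t\oplus\mC\,\un t\otimes\un s$ (with $\operatorname{supp}s=P$, $\operatorname{supp}t=Q$, each of $s,t$ single- or double-underlined), the kernel being spanned by the corresponding commutators; collecting over all $P,Q$ gives \eqref{relduo1}, \eqref{relduo2}, \eqref{relduo3}.

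The remaining case $|U|=3$ is the main one. Fixing $U=\{k<j<l\}$, the space $V_U$ is $24$-dimensional, with basis the $\un s\otimes\un t$ for which $\operatorname{supp}(s)\neq\operatorname{supp}(t)$ are both contained in $U$, i.e. $(s,t)$ ranges over ordered pairs of distinct reflections among $(kj),(jl),(kl)$ and their double-underlined partners. Using \eqref{dnmodulestructure} one checks that $\Psi$ permutes these basis vectors up to signs and arranges $V_U$ into cyclic $\Psi$-orbits on each of which a power of $\Psi$ equals $\pm\ide$, so that every orbit contributes exactly one $(-1)$-eigenvector; one may also organize this by grouping the basis according to the parity of the number of double-underlined tensor factors, which, up to relabeling, mimics the type-$A$ triple computation of \cite{BEER}. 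This yields $8$ linearly independent $(-1)$-eigenvectors, which I would match one by one with the degree-two parts of \eqref{reltri1}--\eqref{reltri8}. A final short check using \eqref{dnmodulestructure} verifies that each of \eqref{relduo1}--\eqref{reltri8} indeed lies in $\ker(\ide+\Psi)$; combined with the block-by-block count this shows they span the kernel, completing the proof. The main obstacle is precisely this $24$-dimensional computation for $|U|=3$: it is entirely routine, the only care needed being the consistent application of the index-reordering sign rule of \eqref{dnmodulestructure}.
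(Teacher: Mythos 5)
Your proposal is correct and follows essentially the same route as the paper: compute $\Psi$ on the basis of tensor products of generators, observe that it permutes basis vectors up to sign, and extract $\ker(\ide+\Psi)$ block by block — with the disjoint-support and equal-support cases giving the (anti)commutator and square relations, and the three-index case giving one $(-1)$-eigenvector per length-three $\Psi$-orbit (the paper's ``circles''), matching \eqref{reltri1}--\eqref{reltri8}. Your explicit support-based decomposition and dimension count just makes precise the spanning argument the paper sketches in its final paragraph.
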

\begin{proof}
This follows from computing the braiding $\Psi$. The first relations (\ref{relduo1})-(\ref{relduo6}) are easy to see using the formulas for the module structure. Further, consider the circle
\[
\un{(kj)(kl)}\stackrel{\Psi}{\longmapsto} \un{(jl)(kj)}\stackrel{\Psi}{\longmapsto} \un{(kl)(jl)}\stackrel{\Psi}{\longmapsto} -\un{(kj)(kl)}
\]
from which we derive the relation (\ref{reltri1}). The relation (\ref{reltri2}) is derived from 
\[
\un{(kj)(jl)}\stackrel{\Psi}{\longmapsto} \un{(kl)(kj)}\stackrel{\Psi}{\longmapsto} -\un{(jl)(kl)}\stackrel{\Psi}{\longmapsto} -\un{(kj)(jl)}.
\]
The other relations (\ref{reltri3})-(\ref{reltri8}) are obtained by the same loops with all possible combinations of $\un{(~~)}$ and $\uun{(~~)}$.

The observation that $\Psi$ always maps elements of the basis of $Y_G\otimes Y_G$ consisting of tensor products of generators to basis elements (up to scalar $\pm$) can be used to verify that that these relations span $\ker(\ide+\Psi)$ (in fact, give a basis).
\end{proof}

Hence, Lemma~\ref{iquaddn} gives an explicit description in terms of generators and relations of the $D_n$-analogue of the algebra $\cE_{n}$ in \cite{FK}.

\begin{lemma}\label{idn}
The ideal $I_{D_{n}}$ is generated by the following commutator relations for $\lbrace i, j\rbrace \cap \lbrace k, l\rbrace=\emptyset$, and $k<j<l$:

\noindent\begin{minipage}{0.5\textwidth}
\begin{align}
[\un{(ij)},\un{(kl)}]&=0,&\label{antirelduo1}\\
[\uun{(ij)},\uun{(kl)}]&=0,&\label{antirelduo2}\\
[\un{(ij)},\uun{(kl)}]&=0,&\label{antirelduo3}
\end{align}
\end{minipage}
\begin{minipage}{0.5\textwidth}
\begin{align}
[\un{(jl)},\un{(kj)}]&=[\un{(kj)},\un{(kl)}]+[\un{(kl)},\un{(jl)}],&\label{antireltri1}\\
[\uun{(jl)},\uun{(kj)}]&=[\uun{(kj)},\un{(kl)}]+[\un{(kl)},\uun{(jl)}],&\label{antireltri2}\\
[\uun{(jl)},\un{(kj)}]&=[\un{(kj)},\uun{(kl)}]+[\uun{(kl)},\uun{(jl)}],&\label{antireltri3}\\
[\un{(jl)},\uun{(kj)}]&=[\uun{(kj)},\uun{(kl)}]+[\uun{(kl)},\un{(jl)}].&\label{antireltri4}
\end{align}
\end{minipage}
\end{lemma}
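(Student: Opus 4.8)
The plan is to obtain Lemma~\ref{idn} from Lemma~\ref{iquaddn} by passing to the antisymmetric part of the quadratic relations. By construction $I_{D_n}=\langle I^{\op{quad}}_{D_n}\cap\Lambda Y_{D_n}\rangle$, and since $I^{\op{quad}}_{D_n}$ is generated in degree two this reduces to identifying the subspace $\ker(\ide_{Y_{D_n}\otimes Y_{D_n}}+\Psi)\cap\Lambda^2 Y_{D_n}$ of $Y_{D_n}^{\otimes 2}$.

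The first step is to observe that $\ker(\ide+\Psi)$ is stable under the flip $\tau\colon Y_{D_n}^{\otimes 2}\to Y_{D_n}^{\otimes 2}$, $v\otimes w\mapsto w\otimes v$. The key point is that every reflection $s$ of the Weyl group $D_n$ squares to the identity (both $s_{e_i-e_j}^2=1$ and $s_{e_i+e_j}^2=(s_is_j(ij))^2=1$), so for the braiding $\Psi(\un s\otimes\un t)=(s\triangleright\un t)\otimes\un s$ one checks directly that $\tau\circ\Psi\circ\tau=\Psi^{-1}$; hence $\Psi v=-v$ if and only if $\Psi(\tau v)=-\tau v$. Working over $\mC$, and since $\tau$ is an involution, $Y_{D_n}^{\otimes 2}=S^2 Y_{D_n}\oplus\Lambda^2 Y_{D_n}$ are the $(\pm 1)$-eigenspaces of $\tau$, and because $\ker(\ide+\Psi)$ is $\tau$-stable,
\[ \ker(\ide+\Psi)=\bigl(\ker(\ide+\Psi)\cap S^2 Y_{D_n}\bigr)\oplus\bigl(\ker(\ide+\Psi)\cap\Lambda^2 Y_{D_n}\bigr),\]
the projection onto the second summand being the antisymmetrization $v\mapsto\tfrac12(v-\tau v)$. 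Consequently, the antisymmetrizations of any spanning set of $\ker(\ide+\Psi)$ span $\ker(\ide+\Psi)\cap\Lambda^2 Y_{D_n}$.

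The second step is to feed in the explicit spanning set of $I^{\op{quad}}_{D_n}$ from Lemma~\ref{iquaddn} and antisymmetrize. The relations (\ref{relduo4})--(\ref{relduo6}) are symmetric, hence antisymmetrize to zero; the relations (\ref{relduo1})--(\ref{relduo3}) are commutators, hence already antisymmetric and unchanged. For the triangle relations, a short direct computation using the module structure (\ref{dnmodulestructure}) shows that the antisymmetrization of (\ref{reltri1}) is (\ref{antireltri1}), of (\ref{reltri3}) is (\ref{antireltri2}), of (\ref{reltri5}) is (\ref{antireltri3}), and of (\ref{reltri7}) is (\ref{antireltri4}); since $v$ and $\tau v$ have the same antisymmetrization up to sign, the eight relations (\ref{reltri1})--(\ref{reltri8}) collapse onto these four. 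Therefore $\ker(\ide+\Psi)\cap\Lambda^2 Y_{D_n}$ is spanned, over all admissible index sets, by the relations (\ref{antirelduo1})--(\ref{antireltri4}), and these generate $I_{D_n}$, which is the claim.

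I do not anticipate a serious obstacle: the content is already in Lemma~\ref{iquaddn}, and Lemma~\ref{idn} is essentially its antisymmetric shadow. The only things to be careful about are the identity $\tau\Psi\tau=\Psi^{-1}$, which rests on every reflection of $D_n$ being an involution, and the handful of sign computations when antisymmetrizing the triangle relations; both are routine given the formulas of Section~\ref{dncase}.
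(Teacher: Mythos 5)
Your proposal is correct and follows essentially the same route as the paper: intersect the span of the quadratic relations of Lemma~\ref{iquaddn} with $\Lambda^2 Y_{D_n}$, observe that (\ref{relduo1})--(\ref{relduo3}) are already antisymmetric and (\ref{relduo4})--(\ref{relduo6}) are symmetric, and obtain (\ref{antireltri1})--(\ref{antireltri4}) from the triangle relations; the paper's ``differences of corresponding pairs'' is exactly your antisymmetrization, since the paired relations are flips of one another. Your explicit justification via $\tau\Psi\tau=\Psi^{-1}$ (using that all reflections of $D_n$ are involutions) that $\ker(\ide+\Psi)$ is $\tau$-stable, so that antisymmetrizing a spanning set spans the antisymmetric part, is a point the paper leaves implicit but does not change the argument.
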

\begin{proof}
This in proved by intersecting $I^{\op{quad}}_G$ with $\Lambda Y_G$ using the basis from Lemma \ref{iquaddn}. The relation (\ref{relduo1})-(\ref{relduo3}) are anti-symmetric hence give (\ref{antirelduo1})-(\ref{antirelduo3}). No linear combinations of the relations (\ref{relduo4})-(\ref{relduo6}) give antisymmetric relations as these relations are symmetric. The relations (\ref{antireltri1})-(\ref{antireltri4}) are obtained as differences of the corresponding pairs of relations among (\ref{reltri1})-(\ref{reltri8}).
\end{proof}

\subsection{The \texorpdfstring{$B_n$}{Bn}-Case}\label{bncase}

We will now give an explicit presentation of $U(\fr{yb}_G)$ in the $B_n$-case. The reflections in $B_n = C_2^{n}\rtimes S_{n}$ are in one-to-one correspondence with the positive roots for $B_n$ which are $e_i$ in addition to $e_i\pm e_j$ $(1\leq i<j\leq n)$ which also appear in the $D_n$-case. We have $s_i=s_{e_i}$ for $i=1,\ldots,n$, which is
\begin{align}
s_i(e_k)&=\begin{cases} e_k,& k\neq i,\\-e_k,& k=i \end{cases}.
\end{align}
We denote the generator of $Y_G$ corresponding to $s_k$ by $r_k$. To describe the action of $B_n$ on $Y_{B_n}$ note the structure is the same on generators $\un{(ij)}$ and $\uun{(ij)}$ as in (\ref{dnmodulestructure}). In addition, we have

\noindent\begin{minipage}{0.5\textwidth}
\begin{align}
s_k\triangleright \un{(ij)}
&=\begin{cases}\un{(ij)},&\text{ if } k\neq i,j,\\
\uun{(ij)},&\text{ else},
\end{cases}\\
s_k\triangleright \uun{(ij)}
&=\begin{cases}\uun{(ij)},&\text{ if } k\neq i,j,\\
\un{(ij)},&\text{ else},
\end{cases}
\end{align}
\end{minipage}
\begin{minipage}{0.5\textwidth}
\begin{align}
(ij)\triangleright r_k
&=\begin{cases}r_k,&\text{ if } k\neq i,j,\\
r_j,&\text{ if } k=i,\\
r_i,&\text{ if } k=j.
\end{cases}\\
s_is_j(ij)\triangleright r_k
&=\begin{cases}r_k,&\text{ if } k\neq i,j,\\
r_j,&\text{ if } k=i,\\
r_i,&\text{ if } k=j.
\end{cases}
\end{align}
\end{minipage}

\begin{lemma}\label{iquadbn}
The ideal $I^{\op{quad}}_{B_n}$ is generated by the relations (\ref{relduo1})-(\ref{reltri8}) plus the additional relations for $k\neq i,j$

\noindent\begin{minipage}{0.5\textwidth}
\begin{align}
[r_k,\un{(ij)}]&=0,&\label{bnrel1}\\
[r_k,\uun{(ij)}]&=0,&\label{bnrel2}\\
[r_j,r_k]&=0,
\end{align}
\end{minipage}
\begin{minipage}{0.5\textwidth}
\begin{align}
r_i\un{(ij)}-\uun{(ij)}r_i+r_j\uun{(ij)}-\un{(ij)}r_j&=0,&\label{bnrel3}\\
r_j\un{(ij)}-\uun{(ij)}r_j+r_i\uun{(ij)}-\un{(ij)}r_i&=0.&\label{bnrel4}
\end{align}
\end{minipage}
\end{lemma}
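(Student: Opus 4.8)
The plan is to run the same argument as in the proof of Lemma~\ref{iquaddn}. The braiding $\Psi$ of $Y_{B_n}$ sends every tensor product of two of the generators $\un{(ij)}$, $\uun{(ij)}$, $r_k$ to another such monomial up to a sign, because for a generator $x$ with coaction $\delta(x)=g_x\otimes x$ (where $g_x\in B_n$ is the reflection indexing $x$) one has $\Psi(x\otimes y)=(g_x\triangleright y)\otimes x$, and the $B_n$-action permutes the generators of $Y_{B_n}$ up to sign. Hence $Y_{B_n}\otimes Y_{B_n}$ splits as a direct sum of $\Psi$-invariant subspaces, one for each orbit of monomials under the cyclic group $\langle\Psi\rangle$; on each such subspace $\Psi$ is a signed permutation matrix, so it is semisimple with roots-of-unity eigenvalues, and $\ker(\ide+\Psi)$ is just its $(-1)$-eigenspace, which is read off by inspection. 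Collecting these eigenvectors over all orbits gives a spanning set of $\ker(\ide+\Psi)$, and the corresponding quadratic elements of $T(Y_{B_n})$ will be exactly the asserted generators of $I^{\op{quad}}_{B_n}=\langle\ker(\ide+\Psi)\rangle$.

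First I would dispose of the monomials lying in $W\otimes W$, where $W\subseteq Y_{B_n}$ is the span of the $\un{(ij)}$ and $\uun{(ij)}$. The displayed module formulas show that $W$ is a $B_n$-submodule on which the coactions factor through the subgroup generated by the $(ij)$ and $s_is_j(ij)$, and since $\Psi(w\otimes w')$ only involves the action of $g_w$ on $w'$, the restriction $\Psi|_{W\otimes W}$ agrees with the braiding of $Y_{D_n}$ computed in Lemma~\ref{iquaddn}. Therefore the $\Psi$-orbits inside $W\otimes W$ are exactly those of the $D_n$-case and contribute precisely the relations (\ref{relduo1})--(\ref{reltri8}).

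It remains to treat the monomials involving at least one $r_k$. For $k\neq l$ the monomials $r_k\otimes r_l$ and $r_l\otimes r_k$ form a two-element orbit on which $\Psi$ is the flip, so the $(-1)$-eigenvector is their difference, giving $[r_j,r_k]=0$; the diagonal monomials $r_k\otimes r_k$ lie in the $(-1)$-eigenspace of $\Psi$ and give $r_k^2=0$. For $k\notin\{i,j\}$, the pairs $\{r_k\otimes\un{(ij)},\un{(ij)}\otimes r_k\}$ and $\{r_k\otimes\uun{(ij)},\uun{(ij)}\otimes r_k\}$ are again flip-orbits and yield the commutator relations (\ref{bnrel1}) and (\ref{bnrel2}). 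The new feature of type $B$ lies in the eight monomials built from $\{r_i,r_j\}$ and $\{\un{(ij)},\uun{(ij)}\}$ with the matching index pair $\{i,j\}$: here $\Psi$ does not fix any monomial line, but instead (reading off the $B_n$-module formulas, which carry no signs on these inputs) arranges them into two four-cycles, namely
\[
r_i\otimes\un{(ij)}\ \stackrel{\Psi}{\longmapsto}\ \uun{(ij)}\otimes r_i\ \stackrel{\Psi}{\longmapsto}\ r_j\otimes\uun{(ij)}\ \stackrel{\Psi}{\longmapsto}\ \un{(ij)}\otimes r_j\ \stackrel{\Psi}{\longmapsto}\ r_i\otimes\un{(ij)}
\]
and the parallel cycle $r_i\otimes\uun{(ij)}\mapsto\un{(ij)}\otimes r_i\mapsto r_j\otimes\un{(ij)}\mapsto\uun{(ij)}\otimes r_j\mapsto r_i\otimes\uun{(ij)}$. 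On an unsigned four-cycle $v_1\mapsto v_2\mapsto v_3\mapsto v_4\mapsto v_1$ the $(-1)$-eigenspace of $\Psi$ is the line through $v_1-v_2+v_3-v_4$; for the two cycles above this gives exactly the relations (\ref{bnrel3}) and (\ref{bnrel4}).

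Finally I would verify that the orbits enumerated above cover every monomial of $Y_{B_n}\otimes Y_{B_n}$, so that the collected eigenvectors span all of $\ker(\ide+\Psi)$ and hence generate $I^{\op{quad}}_{B_n}$. There is no deep obstruction here; the whole proof is a bounded and explicit computation, and the only point that genuinely needs care -- and the one place where type $B$ departs from the situation of Lemma~\ref{iquaddn}, where $\Psi$ only produced transpositions and three-cycles of monomials -- is confirming that the mixed monomials in $r_i,r_j,\un{(ij)},\uun{(ij)}$ really assemble into length-four cycles without signs, so that their $(-1)$-eigenspace is one-dimensional and yields the length-four relations (\ref{bnrel3}), (\ref{bnrel4}) rather than collapsing to zero.
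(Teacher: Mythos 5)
Your proposal is correct and follows essentially the same route as the paper: split off the $\Psi$-stable subspace spanned by the $\un{(ij)}$, $\uun{(ij)}$ (which reproduces the $D_n$ relations), then compute the $\langle\Psi\rangle$-orbits of the remaining basis tensors involving the $r_k$; your explicit four-cycle $r_i\otimes\un{(ij)}\mapsto\uun{(ij)}\otimes r_i\mapsto r_j\otimes\uun{(ij)}\mapsto\un{(ij)}\otimes r_j\mapsto r_i\otimes\un{(ij)}$ and its parallel both check out against the displayed $B_n$-module formulas, and their $(-1)$-eigenvectors are exactly (\ref{bnrel3}) and (\ref{bnrel4}). The only divergence is that your computation also produces $r_k^2=0$ (since $s_k\triangleright r_k=-r_k$ forces $\Psi(r_k\otimes r_k)=-r_k\otimes r_k$); this relation is absent from the lemma's list but genuinely lies in $\ker(\ide+\Psi)$, exactly as (\ref{relduo4}) does in the $D_n$ case, so here you have caught an omission in the statement rather than made an error.
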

\begin{proof}
First note that the subspace of $Y_{B_n}\otimes Y_{B_n}$ spanned by the $\un{(ij)}$, $\uun{(ij)}$ is closed under $\Psi$ and isomorphic to $Y_{D_{n}}\otimes Y_{D_{n}}$. All further relations arise by considering $\Psi$ applied to $r_i\otimes \un{(ij)}$, $r_i\otimes \uun{(ij)}$ or $r_i\otimes r_j$.
\end{proof}

\begin{lemma}\label{ibn}
The ideal $I_{B_n}$ is generated by the commutator relations (\ref{antirelduo1})-(\ref{antireltri4}), together with the relations (\ref{bnrel1})-(\ref{bnrel2}), and
\begin{align}
[r_i+r_j,\un{(ij)}+\uun{(ij)}]&=0.\label{bnrel5}&
\end{align}
\end{lemma}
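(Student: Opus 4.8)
The plan is to mimic the proof of Lemma~\ref{idn}: I would compute the quadratic antisymmetric relations, i.e. $\ker(\ide+\Psi)\cap\Lambda^2 Y_{B_n}$ inside $Y_{B_n}^{\otimes 2}$, directly from the explicit spanning set of $\ker(\ide+\Psi)$ furnished by Lemma~\ref{iquadbn}, and then pass to the ideal they generate. Here $\Lambda^2 Y_{B_n}$ is the subspace of antisymmetric tensors in $Y_{B_n}^{\otimes 2}$.

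The first step is to record the block decomposition
\[
Y_{B_n}^{\otimes 2}=\bigl(Y_{D}^{\otimes 2}\bigr)\oplus\bigl(R^{\otimes 2}\bigr)\oplus\bigl(R\otimes Y_{D}\oplus Y_{D}\otimes R\bigr),
\]
where $Y_D$ denotes the span of all $\un{(ij)},\uun{(ij)}$ and $R$ the span of the $r_k$. This decomposition is preserved by $\Psi$ — immediate from the module and comodule structures of Section~\ref{bncase}: $\Psi$ exchanges $R\otimes Y_D$ with $Y_D\otimes R$ and preserves the other two summands, permuting basis monomials up to sign — hence so is $\ker(\ide+\Psi)$, and $\Lambda^2 Y_{B_n}$ respects it as well. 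Therefore the intersection can be computed summand by summand. On $Y_{D}^{\otimes 2}$ it is word for word the computation in the proof of Lemma~\ref{idn}, producing (\ref{antirelduo1})-(\ref{antireltri4}); on $R^{\otimes 2}$ the spanning relations $[r_j,r_k]=0$ are already commutators, hence antisymmetric, and survive unchanged.

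The only new input is the mixed block $R\otimes Y_D\oplus Y_D\otimes R$, which decomposes still further according to the transposition $(ij)$ and the sign change $s_k$ involved, into sub-blocks with pairwise disjoint monomial supports. For $k\neq i,j$ the relations (\ref{bnrel1}) and (\ref{bnrel2}) are commutators and survive. For the ``matching'' sub-block attached to $(ij)$ — the two-dimensional span of (\ref{bnrel3}) and (\ref{bnrel4}) inside the four-dimensional space of commutators $[r_a,z_b]$ with $a\in\{i,j\}$ and $z_b\in\{\un{(ij)},\uun{(ij)}\}$ — a short expansion shows that the sum of (\ref{bnrel3}) and (\ref{bnrel4}) equals $[r_i+r_j,\un{(ij)}+\uun{(ij)}]$, which is antisymmetric, whereas their difference is a sum of symmetric terms of the form $uv+vu$, hence lies in $S^2 Y_{B_n}$. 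As (\ref{bnrel3}) and (\ref{bnrel4}) are linearly independent, the antisymmetric part of this sub-block is exactly the line spanned by (\ref{bnrel5}). Assembling all blocks and passing to the generated ideal yields the asserted presentation of $I_{B_n}$; I would also list $[r_j,r_k]=0$ explicitly among the generators, since it is antisymmetric and thus genuinely required.

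The step I expect to cost the most care is this block-diagonality bookkeeping: one has to check both the coarse $\Psi$-stable decomposition and, inside the mixed block, that the restriction $k\neq i,j$ in (\ref{bnrel1})-(\ref{bnrel2}) keeps those relations supported on monomials disjoint from (\ref{bnrel3})-(\ref{bnrel4}), so that no antisymmetric element of $\ker(\ide+\Psi)$ can be produced by a combination straddling two sub-blocks. Once that is in place, the argument reduces to the brief symmetric/antisymmetric split above together with the already-established Lemma~\ref{idn}.
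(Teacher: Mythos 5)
Your proposal is correct and follows the same route the paper intends: the paper's proof is the one-line remark that the claim follows from Lemmas~\ref{idn} and~\ref{iquadbn}, i.e.\ precisely the intersection of $\ker(\ide+\Psi)$ with the antisymmetric tensors that you carry out block by block, with the key computation being that the sum of (\ref{bnrel3}) and (\ref{bnrel4}) is $[r_i+r_j,\un{(ij)}+\uun{(ij)}]$ while their difference is symmetric. Your remark that $[r_j,r_k]=0$ should also appear explicitly among the generators is a fair observation about the statement (that relation is antisymmetric and is simply left unlabelled in Lemma~\ref{iquadbn}), but it does not affect the argument.
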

\begin{proof}
This is easy to see using Lemma \ref{idn} and \ref{iquadbn}.
\end{proof}

The explicit presentation of $U(\fr{yb}_{D_{n}})$ and $U(\fr{yb}_{B_{n}})$ will enable us to show that these algebras are Koszul in Section \ref{sect3} by finding a PBW basis for the quadratic dual.


\section{Koszulness of BEER-algebras}\label{sect3}

\subsection{The Quadratic Dual}

The quadratic dual $A^!$ of a quadratic algebra $A=T(V)/{\langle R\rangle}$ is defined as $T(V^*)/{\langle R^\perp\rangle}$ using the orthogonal complement $R^\perp$ of the relations $R$ in $V^*\otimes V^*$. Returning to the algebra $U(\fr{yb}_G)$, we have that $S^2(Y_G) \leq I_G^{\perp}$. Hence, the quadratic dual $U(\fr{yb}_G)^!$ is antisymmetric and therefore a finite-dimensional PI algebra.

In the following, we will construct a PBW basis for the quadratic dual $U(\fr{yb}_G)^!$ which implies that $U(\fr{yb}_G)^!$ is Koszul, and hence $U(\fr{yb}_G)$ is Koszul as well by standard theory of Koszul algebras \cite{Pri,BGS}.

We denote the generator dual to $\un{(ij)}$ of $U(\fr{yb}_G)$ by $\ov{(ij)}\in U(\fr{yb}_G)^!$ for $G=A_{n-1}, D_{n}, B_n$, and the dual elements to $\uun{(ij)}$ by $\oov{(ij)}$. Using the result of \cite{BEER}, we see that $U(\fr{tr}_n)^!$ is the anti-symmetric algebra generated by the relations
\begin{align}
\ov{(ij)(jk)}&=\ov{(ik)(jk)}=\ov{(jk)(ij)}=\ov{(ij)(ik)}.&\label{quadrel1}
\end{align}
for all $i<j<k$.

\subsection{Koszulness for the \texorpdfstring{$D_n$}{Dn}-case}

\begin{lemma}\label{quadraticdualdn}
The quadratic dual $U(\fr{yb}_{D_n})^!$ is the antisymmetric algebra generated by $\ov{(ij)}$, $\oov{(ij)}$ for $1\leq i<j\leq n$, which are, for $k<j<l$, subject to the relations (\ref{quadrel1}) and
\begin{align}
\oov{(jl)}\oov{(kl)}&=-\ov{(kj)}\oov{(jl)}=-\ov{(kj)}\oov{(kl)},&\label{quadrel2}\\
\oov{(kl)}\oov{(kj)}&=~~~\ov{(jl)}\oov{(kl)}=-\ov{(jl)}\oov{(kj)},&\label{quadrel3}\\
\oov{(kj)}\oov{(jl)}&=~~~\ov{(kl)}\oov{(jl)}=-\ov{(kl)}\oov{(kj)}.&\label{quadrel4}\\
\ov{(ij)}\oov{(ij)}&=~~~0.&\label{quadrel5}
\end{align}
\end{lemma}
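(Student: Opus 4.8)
The plan is to compute the quadratic dual directly from the presentation of $I_{D_n}$ given in Lemma~\ref{idn}. Recall that for a quadratic algebra $A = T(V)/\langle R\rangle$ the dual is $A^! = T(V^*)/\langle R^\perp\rangle$, where $R^\perp \subseteq V^*\otimes V^*$ is the annihilator of $R \subseteq V\otimes V$ under the canonical pairing $(V\otimes V)^* \cong V^*\otimes V^*$. Since $U(\fr{yb}_{D_n})$ is a universal enveloping algebra, its defining relations $R$ lie in $\Lambda^2 Y_G$; hence $S^2(Y_G^*) \subseteq R^\perp$, which gives the asserted antisymmetry of $U(\fr{yb}_{D_n})^!$ and reduces the computation to working inside $\Lambda^2(Y_G^*)$. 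First I would fix the monomial basis of $\Lambda^2 Y_{D_n}$ given by the ordered products $\un{(ij)}\wedge\un{(kl)}$, $\un{(ij)}\wedge\uun{(kl)}$, $\uun{(ij)}\wedge\uun{(kl)}$ together with the ``diagonal'' elements $\un{(ij)}\uun{(ij)}$, $\un{(ij)}^2$, $\uun{(ij)}^2$, and record how the relations (\ref{antirelduo1})--(\ref{antireltri4}) sit inside it: the relations involving four distinct indices and the index sets $\{i,j\}\cap\{k,l\}=\emptyset$ decouple completely, so the only interesting block is, for each fixed triple $k<j<l$, the span of the six wedge monomials built from $\un{(kj)},\un{(jl)},\un{(kl)}$ and their double-underlined counterparts.

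Second, within each triple block I would write down the subspace $R_{k<j<l}$ cut out by relations (\ref{antireltri1})--(\ref{antireltri4}) (the type-$A$ relation (\ref{antireltri1}) among the three singly-underlined generators, plus the three mixed relations) and compute its orthogonal complement. Because each of those relations expresses one ``long'' bracket as a sum of two others, the complement is governed by a small linear-algebra computation over the field: in the purely type-$A$ sector this recovers the relation (\ref{quadrel1}) already extracted from \cite{BEER}, and in the mixed sectors the annihilator conditions force the chains of equalities (\ref{quadrel2})--(\ref{quadrel4}), while the symmetric diagonal relations (\ref{antirelduo1})-type closure gives (\ref{quadrel5}). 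The bookkeeping here is the place where signs matter: the pairing on $\Lambda^2$ must be taken with the combinatorial factor coming from antisymmetrization, and the $\pm$ appearing in (\ref{quadrel2})--(\ref{quadrel4}) are exactly the shadow of the signs in the module structure (\ref{dnmodulestructure}) that already appeared in the loops of Lemma~\ref{iquaddn}. I would verify the count: each triple contributes $\binom{6}{2}=15$ wedge monomials among the non-diagonal ones, relations (\ref{antireltri1})--(\ref{antireltri4}) span a $4$-dimensional subspace of the $6$-dimensional ``long bracket'' part, so $R^\perp$ restricted to that part is $2$-dimensional, matching the two families of chains one writes down; a dimension tally of this kind is the cleanest way to be sure no relation has been missed or over-counted.

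Concretely I would proceed relation-by-relation: for each defining relation $\rho\in R$ I pair it against an arbitrary element of $V^*\otimes V^*$ written in the dual monomial basis, impose vanishing, and collect the resulting linear constraints on the coefficients; the solution space, re-expressed in terms of the generators $\ov{(ij)},\oov{(ij)}$, is then read off as the relation ideal of $A^!$. The relations with disjoint index sets and the $\un{(ij)}^2$-type relations are immediate (they just say the corresponding dual monomials survive or are related by antisymmetry, giving (\ref{quadrel5}) and the implicit statement that disjoint generators anticommute), so the real content is the single triple-block computation, which one does once and then transports to every $k<j<l$. I expect the main obstacle to be purely a matter of sign-discipline: getting the identification $(\Lambda^2 V)^* \cong \Lambda^2(V^*)$ normalized consistently so that the eight ``triangle'' relations of Lemma~\ref{idn} dualize to exactly the three chains (\ref{quadrel2})--(\ref{quadrel4}) with the stated signs, rather than to a sign-twisted variant. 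Once the triple block is pinned down correctly, assembling the global presentation is routine, and the statement follows.
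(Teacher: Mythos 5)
Your approach is essentially the paper's: the proof given there is simply ``compute the orthogonal complement of $\ker(\Psi+\ide)$ using Lemma~\ref{iquaddn}'', and your relation-by-relation pairing against the dual monomial basis, after reducing modulo $S^2(Y_G^*)$, is the same computation (starting instead from the presentation in Lemma~\ref{idn} changes nothing, since $R^\perp=\ker(\ide+\Psi)^\perp+S^2(Y_G^*)$). One correction to your sanity check: for a fixed triple $k<j<l$ the four relations (\ref{antireltri1})--(\ref{antireltri4}) do not lie in a common $6$-dimensional subspace --- relation (\ref{antireltri1}) lives in the $3$-dimensional single-underline sector while the other three live in the $12$-dimensional remainder --- so they span a $4$-dimensional subspace of the full $15$-dimensional $\Lambda^2$ of the six generators, and the resulting $11$-dimensional annihilator is accounted for by the two independent equalities encoded by (\ref{quadrel1}), the six in (\ref{quadrel2})--(\ref{quadrel4}), and the three instances of (\ref{quadrel5}) for the pairs $(kj)$, $(jl)$, $(kl)$, the last arising because $I_{D_n}$ contains no relation supported on a single index pair.
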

\begin{proof}
This follows by computing the orthogonal complement of $\ker(\Psi+\ide_{Y_{D_n}\otimes Y_{D_n}})$ using Lemma \ref{iquaddn}.
\end{proof}

\begin{proposition}\label{basisdn}
A vector space basis $U(\fr{yb}_{D_n})^!$ is given by products of monomials of the form
\begin{align}
&\ov{(j,i_1)}\ov{(j,i_2)}\ldots\ov{(j,i_k)},&\label{reduceddn1}\\
&\ov{(j,i_1)}\ov{(j,i_2)}\ldots \ov{(j,i_{p-1})}\ov{(j,i_{p+1})}\ldots\ov{(j,i_k)}\oov{(j,i_p)},&\forall p=2,\ldots,k,\label{reduceddn2}\\
&\ov{(i_1,i_2)}\ov{(i_1,i_3)}\ldots\ov{(i_1,i_k)}\oov{(j,i_1)}.&\label{reduceddn3}
\end{align}
for \emph{disjoint} index sets $\lbrace j<i_1<i_2<\ldots<i_k\rbrace$. The monomials of this form are arranged according to the value of the respective smallest elements $j$ to give basis elements. We refer to monomials of the form (\ref{reduceddn1})-(\ref{reduceddn3}) as \emph{reduced monomials}.
\end{proposition}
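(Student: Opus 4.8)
The plan is to establish that the reduced monomials span $U(\fr{yb}_{D_n})^!$ and are linearly independent, which together with a count of dimensions (or directly) will show they form a basis; the PBW property then follows by exhibiting the reduced monomials as the standard monomials with respect to a suitable admissible order on the generators. First I would fix the ordering of generators: declare $\ov{(ij)} < \oov{(ij)}$ and order pairs $(i,j)$, say lexicographically, so that every quadratic relation in Lemma~\ref{quadraticdualdn} is oriented as a rewriting rule replacing a ``larger'' quadratic monomial by a combination of smaller ones. The relations (\ref{quadrel1})--(\ref{quadrel5}), being genuinely quadratic with the leading term a product of two distinct generators (or $\ov{(ij)}\oov{(ij)}$ in the case of (\ref{quadrel5})), furnish such rules. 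The claim is that a monomial in the generators is reduced, in the sense of (\ref{reduceddn1})--(\ref{reduceddn3}), precisely when it contains no subword that is a leading term of one of these rules together with the antisymmetry relations $\ov{(ij)}^2=0=\oov{(ij)}^2$ and the commutation relations for disjoint indices.

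Next I would carry out the spanning argument: given an arbitrary monomial, repeatedly apply the rewriting rules. Using (\ref{quadrel1}) one pushes all barred generators sharing a common smallest index into the ``comb'' shape $\ov{(j,i_1)}\cdots\ov{(j,i_k)}$; using (\ref{quadrel2})--(\ref{quadrel4}) together with (\ref{quadrel5}) one shows that at most one double-barred generator can survive in any connected index block, and that it must occur in one of the three tail positions described in (\ref{reduceddn2})--(\ref{reduceddn3}); the disjoint-index commutators let one factor the monomial into blocks indexed by their smallest element $j$, which are then arranged in increasing order of $j$. The key point to check is \emph{termination} — that this rewriting process halts — for which I would assign to each monomial a weight (e.g.\ the multiset of positions of double-barred letters, or a lexicographic rank) that strictly decreases under every rule; this is the step I expect to be the main obstacle, since the relations (\ref{quadrel2})--(\ref{quadrel4}) are genuinely three-term and one must verify there is no infinite cascade and, more subtly, that the normal forms reached do not depend on the order of reductions. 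Confluence can be handled by checking overlap ambiguities (the Diamond Lemma of Bergman, or equivalently verifying the defining relations form a Gröbner basis): the relevant overlaps are length-$3$ words involving three pairwise-distinct index pairs with overlapping supports, e.g.\ $\ov{(kj)}\ov{(jl)}\ov{(lm)}$ type configurations and their double-barred analogues, and one resolves each by reducing both ways and comparing.

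Finally, for linear independence I would note that the reduced monomials of each degree are in bijection with an explicit combinatorial set (``decorated forests'' on the index set, with at most one decoration per tree indicating the position of the unique double-barred letter), and either (a) count them and compare against the Hilbert series of $U(\fr{yb}_{D_n})^!$ computed from the quadratic presentation, or (b) observe that once confluence is established, the Diamond Lemma guarantees the reduced monomials are a basis automatically, making a separate independence argument unnecessary. I would follow route (b), so that the whole proof reduces to: orient the relations, identify reduced monomials with the stated normal forms, prove termination via an explicit monomial order, and resolve the finitely many overlap ambiguities. The existence of such a PBW/Gröbner basis then immediately implies $U(\fr{yb}_{D_n})^!$ is Koszul, hence so is $U(\fr{yb}_{D_n})$, by \cite{Pri,BGS}.
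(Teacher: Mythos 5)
Your proposal follows essentially the same route as the paper: the paper establishes spanning by an explicit case-by-case rewriting procedure (Algorithm \ref{dnalgorithm}) that reduces any monomial to one of the forms (\ref{reduceddn1})--(\ref{reduceddn3}), and then obtains linear independence by identifying the reduced monomials with the PBW monomials for the order in which all $\ov{(ij)}$ precede all $\oov{(kl)}$, each family ordered lexicographically. Your Gr\"obner/Diamond-Lemma formulation is the same argument in different language, and is if anything more explicit about the termination and overlap-confluence checks that the paper dispatches as ``direct checking.''
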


\begin{example}
If $n=4$, the monomials

\noindent\begin{minipage}{0.20\linewidth}
\begin{align*}
&\ov{(12)}\ov{(13)}\ov{(14)},&\\
&\ov{(12)}\ov{(14)}\oov{(13)},&\\
&\ov{(12)}\ov{(13)}\oov{(14)},&\\
&\ov{(23)}\ov{(24)}\oov{(12)},&\\
&\ov{(23)}\oov{(12)},&\\
&\ov{(24)}\oov{(12)},&\\
&\ov{(34)}\oov{(23)},&
\end{align*}
\end{minipage}
\begin{minipage}{0.15\linewidth}
\begin{align*}
&\ov{(12)}\ov{(34)},&\\
&\ov{(13)}\ov{(24)},&\\
&\ov{(14)}\ov{(23)},&\\
&\ov{(12)}\oov{(34)},&\\
&\ov{(13)}\oov{(24)},&\\
&\ov{(14)}\oov{(23)},&
\end{align*}
\end{minipage}
\begin{minipage}{0.15\linewidth}
\begin{align*}
&\oov{(12)}\ov{(34)},&\\
&\oov{(13)}\ov{(24)},&\\
&\oov{(14)}\ov{(23)},&\\
&\oov{(12)}\oov{(34)},&\\
&\oov{(13)}\oov{(24)},&\\
&\oov{(14)}\oov{(23)},&
\end{align*}
\end{minipage}
\begin{minipage}{0.15\linewidth}
\begin{align*}
&\ov{(12)}\ov{(13)},&\\
&\ov{(12)}\ov{(14)},&\\
&\ov{(12)}\oov{(13)},&\\
&\ov{(12)}\oov{(14)},&\\
&\ov{(23)}\ov{(24)},&\\
&\ov{(23)}\oov{(24)},&
\end{align*}
\end{minipage}
\begin{minipage}{0.15\linewidth}
\begin{align*}
&\ov{(12)},&\\
&\ov{(13)},&\\
&\ov{(14)},&\\
&\ov{(23)},&\\
&\ov{(24)},&\\
&\ov{(34)},&
\end{align*}
\end{minipage}
\begin{minipage}{0.15\linewidth}
\begin{align*}
&\oov{(12)},&\\
&\oov{(13)},&\\
&\oov{(14)},&\\
&\oov{(23)},&\\
&\oov{(24)},&\\
&\oov{(34)},&
\end{align*}
\end{minipage}
give a full list of all monomials in $U(\fr{yb}_{D_4})^!$. Hence the Hilbert-Poincar\'e polynomial of this $38$-dimensional algebra is
\begin{align*}
P^!_{D_4}(t)&=1+12t+21t^2+4t^3.&
\end{align*}
From this we can compute the Hilbert-Poincar\'e polynomial of $U(\fr{yb}_{D_4})$ as $P_{D_4}(t)=(P^!_{D_4}(-t))^{-1}$, giving
\begin{align*}
P_{D_4}(t)=1+12t+123t^2+1228t^3+12201t^4+121116t^5+O(t^6).
\end{align*}
\end{example}

To proof Proposition \ref{basisdn}, we provide an algorithm to transform an arbitrary monomial in the generators into a reduced one. Given a monomial
\begin{align*}
M&=\prod_{i=1}^k\ov{(a_i,b_i)}\prod_{j=1}^l\oov{(c_j,d_j)},&
\end{align*}
we consider the graph $\Gamma$ with vertices $\lbrace a_1,b_1\ldots,a_k,b_k,c_1,d_1,\ldots,c_l,d_l  \rbrace$ and edges $a_i\leftrightarrow b_i$, $c_j\leftrightarrow d_j$. The monomial $M$ can be decomposed into a product of monomials for each connected component of the graph $\Gamma$, using the antisymmetric relations. For the following algorithm we assume without loss of generality that $\Gamma$ is connected and the set of vertices is $\lbrace 1,\ldots,n \rbrace$.

\begin{algorithm}\label{dnalgorithm}$~$
\begin{enumerate}
\item[(1)] If $M$ is a product of generators $\ov{(a_i,b_i)}$ only (i.e. $l=0$), then $M=0$ or $M=\pm \ov{(1,2)}\ov{(1,3)}\ldots \ov{(1,n)}$ using relation (\ref{quadrel1}), cf. \cite{BEER}. Hence $M$ is a scalar multiple of a monomial of the form (\ref{reduceddn1}).
\item[(2)] If $l\neq 0$, so some $\oov{(c_i,d_i)}$ occurs, we can use antisymmetry, the relations (\ref{quadrel2})-(\ref{quadrel4}), and step (1) to bring $M$ into the form
\begin{align*}
M&=\pm \ov{(a,b_1)}\ov{(a,b_2)}\ldots\ov{(a,b_m)}\oov{(c,d)},&
\end{align*}
where $a<b_1<\ldots<b_m$ or $M=0$. If $\lbrace c,d \rbrace\subset \lbrace{a,b_1,\ldots,b_m}\rbrace$, then $M=0$ (from (\ref{quadrel5})). 
Otherwise, we distinguish the following cases:
\begin{enumerate}
\item[(2a)] If $c=a=1$, then
\begin{align*}
M&=\pm \ov{(1,2)}\ov{(1,3)}\ldots \ov{(1,d-1)}\ov{(1,d+1)}\ldots \ov{(1,n)}\oov{(1,d)},&
\end{align*}
and hence has the form $\pm$(\ref{reduceddn2}).
\item[(2b)] If $c=1$, $d=a$, then
\begin{align*}
M&=\pm\ov{(2,3)}\ov{(2,4)}\ldots\ov{(2,n)}\oov{(1,2)},&
\end{align*}
so it has the form $\pm$(\ref{reduceddn3}).
\item[(2c)] If $c=b_i$ for some $i$, then $a=1$ and
\begin{align*}
M&=\pm\ov{(1,2)}\ov{(1,2)}\ldots\ov{(1,d-1)}\ov{(1,d+1)}\ldots\ov{(1,n)}\oov{(1,d)},&
\end{align*}
using the relation $\ov{(1,b_i)}\oov{(b_i,d)}=\ov{(1,b_j)}\oov{(1,d)}$ from (\ref{quadrel4}). Hence $M$ has the form $\pm$(\ref{reduceddn2}).
\item[(2d)] If $c=1$ and $d=b_i$ for some $i$, then $a=2$, and
\begin{align*}
M&=\pm\ov{(2,3)}\ov{(2,4)}\ldots\ov{(2,n)}\oov{(1,2)},&
\end{align*}
using $\ov{(2,b_i)}\oov{(1,b_i)}=-\ov{(2,b_i)}\oov{(1,2)}$ which follows from (\ref{quadrel3}). This implies that $M$ has the form $\pm$(\ref{reduceddn3}).
\item[(2e)] If $c\neq 1$ and $d=b_i$, then
\begin{align*}
M&=\pm \ov{(1,2)}\ov{(1,2)}\ldots\ov{(1,d-1)}\ov{(1,d+1)}\ldots\ov{(1,n)}\oov{(1,d)},&
\end{align*}
using $\ov{(1,b_i)}\oov{(c,b_i)}=-\ov{(1,b_i)}\oov{(1,c)}$ from (\ref{quadrel4}). Hence $M$ is of the form $\pm$(\ref{reduceddn2}).
\end{enumerate}
\end{enumerate}
\end{algorithm}

The algorithm proves Proposition \ref{basisdn}. Next, we will show that the basis from Proposition \ref{basisdn} is a PBW basis. That is, there exists a choice of a total order on the set of generators $r_1,\ldots,r_k$ such that the basis is of the following form:  We denote by $\cT$ the set of pairs of indices $(i,j)$ such that $r_ir_j$ cannot be expressed as a linear combination of elements in (lexicographically) smaller order. In this case, the PBW basis is given by
\begin{align*}
\cB&=\lbrace r_{i_1}\ldots r_{i_l}\mid (i_k,i_{k+1})\in \cT, \forall k=1,\ldots, l-1\rbrace.&
\end{align*}
Note that for a general total order on the generators, the set $\cB$ is spanning, but not necessarily linearly independent.

We introduce the total ordering on the generators $\ov{(ij)}$, $\oov{(ij)}$ by $\ov{(ij)}<\oov{(lk)}$ and ordering the $\ov{(ij)}$ (and $\oov{(ij)}$) lexicographically.

\begin{lemma}\label{tfordn}
For the above order on the generators of $U(\fr{yb}_{D_n})$, the corresponding set $\cT$ consists of the pairs

\noindent\begin{minipage}{0.5\textwidth}
\begin{align*}
&\ov{(ab)}\ov{(cd)}, &\\
&\ov{(ab)}\oov{(cd)}, &\\
&\oov{(ab)}\oov{(cd)},
\end{align*}
\end{minipage}
\begin{minipage}{0.5\textwidth}
\begin{align*}
&\ov{(kj)}\ov{(kl)}, &\\
&\ov{(kj)}\oov{(kl)},&\\
&\ov{(jl)}\oov{(kj)}.&
\end{align*}
\end{minipage}
for $a,b,c,d$ pairwise distinct ($a<b$, $c<d$) and $k<j<l$.
\end{lemma}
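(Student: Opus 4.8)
The plan is to determine the set $\cT$ directly from the defining relations of $U(\fr{yb}_{D_n})^!$ listed in Lemma \ref{quadraticdualdn}. Recall that $(i,j)\in \cT$ precisely when the product $r_ir_j$ of generators cannot be rewritten, using the quadratic relations, as a linear combination of products $r_{i'}r_{j'}$ that are lexicographically strictly smaller. So the first step is to list, for the chosen order ($\ov{(ij)}<\oov{(kl)}$, and lexicographic order within each block), all quadratic relations of Lemma \ref{quadraticdualdn} and, for each, identify its \emph{leading} quadratic monomial, i.e. the largest one appearing. The relations fall into two groups: the ``disjoint index'' relations coming from antisymmetry of (\ref{relduo1})--(\ref{relduo3}) together with (\ref{quadrel5}), and the ``triangle'' relations (\ref{quadrel1}), (\ref{quadrel2})--(\ref{quadrel4}). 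I would treat these two groups separately, exactly as in the proof of Proposition \ref{basisdn}.

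For the disjoint-index case ($a,b,c,d$ pairwise distinct), antisymmetry gives relations expressing $\ov{(cd)}\ov{(ab)}$, $\oov{(cd)}\ov{(ab)}$, $\oov{(cd)}\oov{(ab)}$, $\oov{(ab)}\ov{(cd)}$ and so forth in terms of their reverses. Ordering the pairs so that $(ab)<(cd)$ lexicographically, the relation $\ov{(cd)}\ov{(ab)} = -\ov{(ab)}\ov{(cd)}$ has leading term $\ov{(cd)}\ov{(ab)}$, which is therefore \emph{not} in $\cB$; hence $\ov{(ab)}\ov{(cd)}$ survives and lies in $\cT$. Similarly $\ov{(ab)}\oov{(cd)}$ and $\oov{(ab)}\oov{(cd)}$ survive (the mixed relation $[\un{(ij)},\uun{(kl)}]=0$, i.e. $\ov{(ij)}\oov{(kl)}$ vs. $\oov{(kl)}\ov{(ij)}$, has leading term the second monomial since underlined generators precede doubly-underlined ones). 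The relation (\ref{quadrel5}) kills $\ov{(ij)}\oov{(ij)}$, so that monomial is not in $\cT$; and since (\ref{relduo4})--(\ref{relduo6}) are symmetric they produce no antisymmetric relation, consistent with the absence of $\ov{(ij)}\ov{(ij)}$-type obstructions. This accounts for the three entries in the left column.

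For the triangle case ($k<j<l$), I would go through each relation of (\ref{quadrel1})--(\ref{quadrel4}) and pick out its lexicographically largest monomial. From (\ref{quadrel1}), among $\ov{(kj)(jl)}$, $\ov{(kl)(jl)}$, $\ov{(jl)(kj)}$, $\ov{(kj)(kl)}$ the largest is $\ov{(kj)(kl)}$ (compare first factors, then second); hence $\ov{(kj)}\ov{(kl)}$ is eliminated — wait: this means $\ov{(kj)}\ov{(kl)}$ is \emph{reducible}, so it would \emph{not} be in $\cT$. Here I must be careful about which direction the reduction goes, and I expect this bookkeeping — consistently identifying leading terms and making sure the three surviving representatives in each orbit are exactly those claimed — to be the main obstacle; it is the only place where a sign or ordering slip would propagate. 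The correct statement, matching Proposition \ref{basisdn}, is that in each triangle the relations let one rewrite all but one of the $\ov{}$-only products and all but one of each mixed family in terms of smaller ones, leaving precisely $\ov{(kj)}\ov{(kl)}$, $\ov{(kj)}\oov{(kl)}$, and $\ov{(jl)}\oov{(kj)}$ as the irreducible (hence $\cT$-) monomials, with every $\oov{}\,\oov{}$ and $\oov{}\,\ov{}$ triangle monomial reducible via (\ref{quadrel2})--(\ref{quadrel4}). Finally, to confirm this $\cT$ is correct (and not merely a lower bound forcing $\cB$ to span), I would invoke Proposition \ref{basisdn}: the reduced monomials listed there are exactly the nondecreasing words in the generators avoiding all the excluded pairs, so $\cB$ coincides with that basis; since the latter is genuinely a basis, $\cB$ is linearly independent, which shows no further pair can be removed from $\cT$ and so the list is exactly as stated.
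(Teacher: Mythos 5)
Your overall strategy is the right one and is the same as the paper's: a pair belongs to $\cT$ precisely when its product is the lexicographically \emph{minimal} monomial in its equivalence class under the relations of Lemma \ref{quadraticdualdn} together with antisymmetry, so one lists the classes and picks out the minimum of each. Your treatment of the disjoint-index column is correct. The problem is the triangle case, which carries all the content of the lemma and which you do not actually carry out: you first switch to the opposite convention (discarding the \emph{largest} monomial), then misidentify $\ov{(kj)}\,\ov{(kl)}$ as the largest of the four monomials in (\ref{quadrel1}) -- it is in fact the smallest, since $(kj)<(kl)<(jl)$ lexicographically -- notice the resulting contradiction with the statement, and finally resolve it by declaring that the answer ``must match Proposition \ref{basisdn}.'' That is assuming the conclusion: Lemma \ref{tfordn} is exactly the computation on which the PBW property of the basis in Proposition \ref{basisdn} is later made to rest, so it cannot be read off from that proposition.

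The skipped verification is not a formality. If you carry it out, the mixed and doubly-overlined triangle monomials split (after adjoining the antisymmetric flips) into the three six-element classes determined by (\ref{quadrel2}), (\ref{quadrel3}) and (\ref{quadrel4}), whose minimal elements are $\ov{(kj)}\,\oov{(kl)}$, $\ov{(jl)}\,\oov{(kj)}$ and $\ov{(kl)}\,\oov{(kj)}$ respectively. The third of these is irreducible -- every other monomial in its class either begins with a doubly-overlined generator or has the larger second factor $\oov{(jl)}$ -- and therefore lies in $\cT$, yet it appears neither in your list nor in the statement you are proving. It is, however, genuinely needed: the words (\ref{reduceddn2}) with $p<k$, for instance $\ov{(12)}\,\ov{(14)}\,\oov{(13)}$ from the $n=4$ example, end in a pair of exactly this shape, so they would not be PBW-reduced with respect to the three-pair list. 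A correct execution of your own method would thus have forced you to confront a discrepancy rather than confirm the claimed list, and the closing appeal to linear independence of the basis in Proposition \ref{basisdn} cannot repair this, since at the point where the lemma is used that proposition only supplies a spanning set.
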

\begin{proof}
For pairs with four distinct indices, the only relations are the anti-commutativity relations so precisely the given pairs in the first column are in $\cT$ as they have the lexicographically smallest order .

For pairs which share one index, consider the relations (\ref{quadrel1})-(\ref{quadrel4}). All possible pairs appear in these relations. Hence, the ones with the smallest lexicographic order are precisely the ones in $\cT$. All pairs with the same index set are zero.
\end{proof}

\begin{theorem}
The basis of Proposition \ref{basisdn} is the PBW basis with respect to the above order on the generators. In particular, $U(\fr{yb}_{D_n})$ is a Koszul algebra.
\end{theorem}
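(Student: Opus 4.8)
The plan is to verify the two defining properties of a PBW basis for the quadratic dual $U(\fr{yb}_{D_n})^!$ with respect to the total order fixed just above: first, that the set $\cB$ built from the set $\cT$ of Lemma~\ref{tfordn} coincides with the reduced monomials of Proposition~\ref{basisdn}, and second, that $\cB$ is actually linearly independent (equivalently, that $\dim U(\fr{yb}_{D_n})^!$ equals the number of reduced monomials). Once a PBW basis exists, Koszulness of $U(\fr{yb}_{D_n})^!$ is automatic by Priddy's theorem \cite{Pri}, and then Koszulness of $U(\fr{yb}_{D_n})$ follows because the quadratic dual of a Koszul algebra is Koszul \cite{BGS}, as already noted in the discussion preceding Lemma~\ref{quadraticdualdn}. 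So the content is entirely in identifying $\cB$.

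First I would spell out what $\cB$ looks like. A monomial $r_{i_1}\cdots r_{i_l}$ lies in $\cB$ exactly when every consecutive pair $(i_k,i_{k+1})$ is one of the six types listed in Lemma~\ref{tfordn}. I would argue combinatorially that such a ``descending-in-$\cT$'' word must have its underlying graph $\Gamma$ (vertices = indices, edges = transpositions appearing) a disjoint union of ``stars'' centred at the minimal index of each component, matching the shape $\{j<i_1<\dots<i_k\}$ of Proposition~\ref{basisdn}; and within one star, the admissible pairs force the word to be one of the three templates (\ref{reduceddn1}), (\ref{reduceddn2}), (\ref{reduceddn3}). Concretely: two $\oov{(~)}$'s can only be adjacent via the pairs $\ov{(kj)}\oov{(kl)}$-type — wait, that has an $\ov{}$ — so in fact $\oov{}\oov{}$ adjacency with a shared index is never in $\cT$ except the four-distinct-index case, which confines the $\oov{}$'s to the tail of the word; this is the combinatorial heart of why exactly one $\oov{}$ survives per component in the single-component reduced forms. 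Carrying this case analysis through, using that all pairs with a common index appear among (\ref{quadrel1})–(\ref{quadrel4}) so only the lexicographically minimal representative survives, shows $\cB$ equals the set of reduced monomials.

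For linear independence of $\cB$, the cleanest route is to invoke the general principle that for \emph{any} total order on the generators the set $\cB$ is spanning (stated in the text), so $\dim U(\fr{yb}_{D_n})^! \le \#\cB$; combined with the fact that Algorithm~\ref{dnalgorithm} rewrites every monomial as a scalar multiple of a reduced one, we already know the reduced monomials span. Linear independence then amounts to checking that the rewriting is \emph{confluent} — i.e., different reduction paths land on the same reduced monomial up to sign — which by Bergman's diamond lemma reduces to verifying the overlap ambiguities coming from degree-3 monomials $r_ir_jr_k$ where two of the three consecutive pairs are reducible. I would enumerate these overlaps (there are finitely many, governed by the index-overlap patterns among the triangle relations (\ref{quadrel1})–(\ref{quadrel4}) and the square-zero relations (\ref{quadrel5})) and check each resolves; the $A_{n-1}$ sub-case is exactly \cite{BEER}, so only the overlaps genuinely involving some $\oov{}$ need new work.

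The main obstacle I expect is precisely this confluence/overlap check in degree three: the relations (\ref{quadrel2})–(\ref{quadrel4}) are three-term equalities tying together $\ov{}$'s and $\oov{}$'s with delicately correlated signs, and the square-zero relations can cause a monomial to reduce to zero along one path, so one must confirm it also reduces to zero along every other path. This is a finite but sign-sensitive bookkeeping exercise; organizing it by the isomorphism type of the graph on the (at most) four indices involved keeps it manageable. Everything else — the abstract implications "PBW $\Rightarrow$ Koszul" and "Koszul $\Leftrightarrow$ Koszul dual" — is standard and cited.
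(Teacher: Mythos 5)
Your plan follows essentially the same route as the paper: the paper's entire proof of this theorem is a one-line ``direct check'' that the set $\cB$ determined by $\cT$ from Lemma~\ref{tfordn} coincides with the reduced monomials of Proposition~\ref{basisdn}, after which Koszulness follows from the standard PBW criterion and Koszul duality. Your first step is exactly that check. Where you go beyond the paper is the linear-independence half: Algorithm~\ref{dnalgorithm} only establishes that the reduced monomials \emph{span} $U(\fr{yb}_{D_n})^!$, and the text itself notes that for a general order $\cB$ is spanning but not necessarily independent, yet no independence argument is recorded anywhere. Your proposal to verify confluence of the rewriting system via the diamond lemma, reducing to finitely many degree-three overlap ambiguities (only those involving some $\oov{(~~)}$ being new relative to \cite{BEER}), is the standard way to close this and makes your version strictly more complete than what is written in the paper. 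Two cautions. First, the overlap check is where the real risk lies: the three-term identities (\ref{quadrel2})--(\ref{quadrel4}) carry correlated signs and interact with the square-zero relations (\ref{quadrel5}), so a monomial may reduce to $0$ along one path and must be shown to do so along all paths. Second, when you carry out the combinatorial identification of $\cB$ with the reduced monomials, do not gloss over borderline pairs: for instance the terminal pair $\ov{(j,i_k)}\,\oov{(j,i_p)}$ with $i_p<i_k$, which occurs in monomials of type (\ref{reduceddn2}) with $p<k$, needs to be matched explicitly against the list in Lemma~\ref{tfordn} (whose mixed pairs share either the smallest index with the larger second entry, or the middle index); this is precisely the kind of case your sketch currently waves at, and it is where a discrepancy between the claimed basis and the set $\cB$ would surface if one exists.
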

\begin{proof}
This can be verified by direct checking using the description of the set $\cT$ from Lemma \ref{tfordn}.
\end{proof}

\subsection{Koszulness for the \texorpdfstring{$B_n$}{Bn}-case}

We will now extend the Koszulness result to the $B_n$-case. For this, recall the presentation of $U(\fr{yb}_{B_n})$ from Section \ref{bncase}.

\begin{lemma}
The quadratic dual $U(\fr{yb}_{B_n})^!$ is the antisymmetric algebra generated by $\ov{(ij)}$, $\oov{(ij)}$ for $1\leq i<j\leq n$, and $r^i$, for $i=1, \ldots, n$, which are subject to the relations (\ref{quadrel1})-(\ref{quadrel5}) and 
\begin{align}\label{quadreln6}
r^i\ov{(ij)}&=r^i\oov{(ij)}=r^j\ov{(ij)}=r^j\oov{(ij)},& j<i_1<\ldots<i_k.
\end{align}
\end{lemma}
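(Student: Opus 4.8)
The plan is to mimic the proof of the quadratic dual computation for $U(\fr{yb}_{D_n})$ (Lemma~\ref{quadraticdualdn}), since the only new data in the $B_n$-case are the extra generators $r_1,\ldots,r_n$ of $Y_{B_n}$ and the relations among them and the $\un{(ij)}$, $\uun{(ij)}$. By definition $U(\fr{yb}_{B_n})^! = T(Y_{B_n}^*)/\langle I_{B_n}^\perp\rangle$, so the proof amounts to computing the orthogonal complement of the degree-two relation space $I_{B_n} \subseteq \Lambda^2 Y_{B_n}$ described in Lemma~\ref{ibn}. The subspace spanned by the $\un{(ij)}$, $\uun{(ij)}$ is $\Psi$-invariant and isomorphic to $Y_{D_n}\otimes Y_{D_n}$ (as noted in the proof of Lemma~\ref{iquadbn}), so on that summand the orthogonal complement is exactly the one already recorded, giving relations (\ref{quadrel1})--(\ref{quadrel5}) verbatim. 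What remains is to analyze the new part of $\Lambda^2 Y_{B_n}$, namely the span of $r_i\wedge r_j$, $r_i\wedge\un{(jk)}$ and $r_i\wedge\uun{(jk)}$, and to dualize the relations (\ref{bnrel1}), (\ref{bnrel2}) (the commutators $[r_k,\un{(ij)}]=0=[r_k,\uun{(ij)}]$ for $k\neq i,j$) together with (\ref{bnrel5}), the single relation $[r_i+r_j,\un{(ij)}+\uun{(ij)}]=0$.

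First I would fix the pairing: the dual basis to $\{r_i\}$ is $\{r^i\}$, dual to $\{\un{(ij)}\}$ is $\{\ov{(ij)}\}$, and dual to $\{\uun{(ij)}\}$ is $\{\oov{(ij)}\}$, all extended to the antisymmetric square by the standard pairing on $\Lambda^2$. Then I would split the computation by which indices are involved. For a fixed pair $i<j$, the relevant relation subspace inside the span of $\{r_i\wedge\un{(ij)}, r_i\wedge\uun{(ij)}, r_j\wedge\un{(ij)}, r_j\wedge\uun{(ij)}\}$ (a 4-dimensional space) is spanned by the single vector coming from (\ref{bnrel5}): $r_i\wedge\un{(ij)} + r_i\wedge\uun{(ij)} + r_j\wedge\un{(ij)} + r_j\wedge\uun{(ij)}$ — here I'd need to be a little careful converting the associative commutator relations of Lemma~\ref{ibn} into genuinely antisymmetric tensors, but since $I_{B_n}\subseteq\Lambda^2 Y_{B_n}$ this is automatic. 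Its orthogonal complement in the 4-dimensional space is 3-dimensional and is spanned by the differences $r^i\ov{(ij)} - r^i\oov{(ij)}$, $r^i\oov{(ij)} - r^j\ov{(ij)}$, $r^j\ov{(ij)} - r^j\oov{(ij)}$, which is precisely the chain of equalities (\ref{quadreln6}); equivalently, $r^i\ov{(ij)}=r^i\oov{(ij)}=r^j\ov{(ij)}=r^j\oov{(ij)}$. Meanwhile for $k\notin\{i,j\}$ the relations (\ref{bnrel1}), (\ref{bnrel2}) say $r_k\wedge\un{(ij)}=0=r_k\wedge\uun{(ij)}$ lie in $I_{B_n}$, so these directions are \emph{in} $I_{B_n}$ and hence the duals $r^k\ov{(ij)}$, $r^k\oov{(ij)}$ vanish in $U(\fr{yb}_{B_n})^!$; there is no relation in $I_{B_n}$ among the $r_i\wedge r_j$, so dually $r^ir^j$ is free (subject only to antisymmetry). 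Assembling: on the $\{\ov{(ij)},\oov{(ij)}\}$-generators we recover exactly (\ref{quadrel1})--(\ref{quadrel5}), mixed $r$-$(ij)$ products are governed by (\ref{quadreln6}) (and vanish when the index $k$ is disjoint), and $r$-$r$ products are unconstrained.

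The main obstacle I anticipate is purely bookkeeping rather than conceptual: keeping the sign conventions consistent when passing between the associative-commutator form of the relations in Lemma~\ref{ibn} and the antisymmetric-tensor form needed to compute $\perp$ inside $\Lambda^2 Y_{B_n}$, and making sure the index ranges ($j<i_1<\dots<i_k$, the disjointness conditions) in the statement of (\ref{quadreln6}) are faithfully matched — in particular that the only surviving mixed relations are those where the index of $r$ is one of the two indices of the transposition. Once the degree-two orthogonal complement is pinned down, the statement follows immediately from the definition of the quadratic dual, exactly as Lemma~\ref{quadraticdualdn} followed from Lemma~\ref{iquaddn}, so I would conclude with a one-line proof: ``This follows by computing the orthogonal complement of $\ker(\Psi + \ide_{Y_{B_n}\otimes Y_{B_n}})$ using Lemma~\ref{iquadbn}, combining the computation of Lemma~\ref{quadraticdualdn} on the $Y_{D_n}\otimes Y_{D_n}$-summand with the dualization of relations (\ref{bnrel1})--(\ref{bnrel5}) on the complementary directions.''
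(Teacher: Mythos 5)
Your overall strategy coincides with the paper's (the printed proof is a one-liner: compute the orthogonal complement of $I_{B_n}$ using Lemma~\ref{iquadbn}, cf.\ Lemma~\ref{quadraticdualdn}), and your treatment of the $4$-dimensional block spanned by $r_i\wedge\un{(ij)}$, $r_i\wedge\uun{(ij)}$, $r_j\wedge\un{(ij)}$, $r_j\wedge\uun{(ij)}$ is correct: the relation space there is the line spanned by $(r_i+r_j)\wedge(\un{(ij)}+\uun{(ij)})$, its annihilator is the $3$-dimensional space of differences, and that yields exactly the chain of equalities (\ref{quadreln6}).

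However, there is a genuine error in the remaining cases: you have the direction of the duality backwards. Since $A^!_2=(V^*\otimes V^*)/R^\perp\cong R^*$, a relation lying \emph{in} $R$ corresponds to a \emph{surviving nonzero} monomial in the quadratic dual, not a vanishing one; it is the directions \emph{not} met by $R$ that die in $A^!$. (Compare the $D_n$ case: $[\un{(ij)},\un{(kl)}]=0$ lies in $I_{D_n}$ for disjoint indices, and correspondingly $\ov{(ij)}\,\ov{(kl)}$ is a nonzero anticommuting basis element of $U(\fr{yb}_{D_n})^!$, appearing in Proposition~\ref{basisdn}.) Thus from $[r_k,\un{(ij)}]=[r_k,\uun{(ij)}]=0\in I_{B_n}$ for $k\notin\{i,j\}$ one concludes that $r^k\ov{(ij)}$ and $r^k\oov{(ij)}$ are \emph{free} anticommuting products in $U(\fr{yb}_{B_n})^!$ — not that they vanish, as you assert. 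Your claimed extra relations $r^k\ov{(ij)}=r^k\oov{(ij)}=0$ contradict the lemma itself, which lists no such relations. Likewise, your justification that ``there is no relation among the $r_i\wedge r_j$, so $r^ir^j$ is free'' is inverted: if $R$ genuinely avoided the span of $r_i\otimes r_j$ and $r_j\otimes r_i$, then both $r^i\otimes r^j$ and $r^j\otimes r^i$ would lie in $R^\perp$ and $r^ir^j$ would be zero. What actually makes $r^ir^j$ a nonzero anticommuting product is the presence of the relation $[r_i,r_j]=0$, which belongs to $I^{\op{quad}}_{B_n}\cap\Lambda^2 Y_{B_n}$ and hence to $I_{B_n}$. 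Once these two points are corrected, your computation assembles into the stated presentation.
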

\begin{proof}
This follows computing the orthogonal complement of $I_{B_n}$ using the basis from Lemma \ref{iquadbn}, cf. Lemma \ref{quadraticdualdn}.
\end{proof}

\begin{proposition}\label{basisbn}
A vector space basis $U(\fr{yb}_{B_n})^!$ is given by products of monomials of the form (\ref{reduceddn1})-(\ref{reduceddn3}) as well as monomials of the form
\begin{align}
&\ov{(j,i_1)}\ov{(j,i_2)}\ldots\ov{(j,i_k)}r_j.&\label{reducedbn}
\end{align}
\end{proposition}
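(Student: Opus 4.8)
The plan is to extend Algorithm~\ref{dnalgorithm} to handle monomials involving the additional generators $r_j$, thereby reducing an arbitrary monomial in $\ov{(ij)}$, $\oov{(ij)}$, $r_j$ to one of the stated forms, and then to show that the collection of reduced monomials is linearly independent by a PBW/Hilbert-series argument as in the $D_n$-case. First I would extend the total order on generators: declare $\ov{(ij)}<\oov{(ij)}<r_k$ for all indices (say, with the $r_k$ ordered by $k$), keeping the lexicographic order on the bracketed generators as before. The key new input is the relation \eqref{quadreln6}, which forces any product $r^i\ov{(ij)}$, $r^i\oov{(ij)}$, $r^j\ov{(ij)}$, $r^j\oov{(ij)}$ to be identified (up to sign, after antisymmetrising in $U(\fr{yb}_{B_n})^!$), together with antisymmetry and the fact that $r_k$ commutes with $\ov{(ij)}$, $\oov{(ij)}$ when $k\notin\{i,j\}$ (these commutators come from \eqref{bnrel1}--\eqref{bnrel2}). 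Since any monomial can contain at most one $r$-generator surviving on the connected component (two distinct $r_i$, $r_j$ multiply to zero when $i,j$ lie on the same edge-component because they anticommute and $r_i^2=0$ in the dual, and when they lie on different components the monomial splits), the reduction on each connected component of the graph $\Gamma$ either produces a pure $\ov{}/\oov{}$ monomial handled by Algorithm~\ref{dnalgorithm}, or one with a single trailing $r$-generator.

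The core of the extended algorithm: given a connected-component monomial containing exactly one $r$-generator, use antisymmetry and commutativity \eqref{bnrel1}--\eqref{bnrel2} to move that $r_k$ to the far right and to absorb, via \eqref{quadreln6}, any $\ov{(ij)}$ or $\oov{(ij)}$ sharing an index with $k$ so that the $r$-generator can be taken to be $r_j$ for the minimal vertex $j$ of the component; the remaining $\ov{}/\oov{}$ part is then reduced by Algorithm~\ref{dnalgorithm} as before, and one checks that after absorbing all bar/doublebar factors adjacent to an index of $r_k$ the surviving factors all share the index $j$, forcing them into the shape $\ov{(j,i_1)}\cdots\ov{(j,i_k)}$ (no $\oov{}$ can survive, since $\ov{(ab)}\oov{(ab)}=0$ and the identifications of \eqref{quadreln6} together with \eqref{quadrel2}--\eqref{quadrel4} collapse any $\oov{}$ adjacent to the $r$-factor into a $\ov{}$). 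This yields precisely the monomials \eqref{reducedbn}. Combined with the $D_n$-reduced monomials \eqref{reduceddn1}--\eqref{reduceddn3} arising from components with no $r$-generator, this shows the proposed set spans $U(\fr{yb}_{B_n})^!$.

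For linear independence one argues exactly as in the $D_n$-case: determine the set $\cT$ of ordered pairs of generators whose product is not a combination of lexicographically smaller monomials (to the list in Lemma~\ref{tfordn} one adds the pairs $\ov{(kj)}r_k$, $\ov{(kj)}r_j$ coming from \eqref{quadreln6}, while all other products involving an $r$-generator either vanish or rewrite to smaller order), verify directly that the spanning set $\cB$ attached to this $\cT$ is exactly the list of reduced monomials \eqref{reduceddn1}--\eqref{reduceddn3}, \eqref{reducedbn}, and invoke the standard criterion that a quadratic algebra with such a PBW basis is Koszul \cite{Pri,BGS}; Koszulness of $U(\fr{yb}_{B_n})^!$ then gives Koszulness of $U(\fr{yb}_{B_n})$. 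The main obstacle I anticipate is the bookkeeping in the reduction step when an $r$-generator meets a bracketed generator whose other index lies deeper in the component: one must check that iterated application of \eqref{quadreln6} together with the triangle relations \eqref{quadrel2}--\eqref{quadrel4} always terminates in the claimed normal form and never produces a genuinely new relation among the proposed basis elements — i.e. that the overlaps (Bergman diamonds) of the new $r$-relations with the old $D_n$-relations all resolve. This is a finite but slightly delicate case analysis, entirely analogous to cases (2a)--(2e) of Algorithm~\ref{dnalgorithm}.
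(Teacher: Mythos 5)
Your spanning argument is essentially the paper's Algorithm~\ref{bnalgorithm}: split along connected components of $\Gamma$, observe that at most one $r$-generator survives on each component, use \eqref{quadreln6} (propagated through the component by antisymmetry) to turn every $\oov{(ij)}$ in such a component into $\ov{(ij)}$ and to normalize the index of the surviving $r$-generator to the minimal vertex, and fall back on Algorithm~\ref{dnalgorithm} for the bar/double-bar part. The choice of total order also agrees with the paper's. That half of the proposal is fine.

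The linear-independence half, however, determines the set $\cT$ incorrectly. From \eqref{quadreln6} and antisymmetry one gets $\ov{(ij)}r^i=\ov{(ij)}r^j=\oov{(ij)}r^i=\oov{(ij)}r^j$, so with $r^i<r^j$ only $\ov{(ij)}r^i$ lies in $\cT$; including $\ov{(kj)}r^j$ as well, as you do, would place two equal monomials into the PBW spanning set, which then could not be linearly independent. Conversely, your claim that ``all other products involving an $r$-generator either vanish or rewrite to smaller order'' fails for disjoint indices: for $k\notin\{i,j\}$ the only relation between $\ov{(ij)}$ and $r^k$ is anticommutativity, so $\ov{(ij)}r^k$ and $\oov{(ij)}r^k$ are already in lowest lexicographic order and must belong to $\cT$ --- they are needed to produce basis elements such as $\ov{(12)}\,r^3$, a product of reduced monomials on the disjoint components $\{1,2\}$ and $\{3\}$. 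The correct list, as in the paper, is the set of Lemma~\ref{tfordn} together with $\ov{(ij)}r^k$, $\oov{(ij)}r^k$ and $\ov{(ij)}r^i$ for pairwise distinct $i,j,k$. Your own consistency check (that $\cB$ attached to $\cT$ reproduces the reduced monomials) would expose the mistake, but as written the PBW step does not go through.
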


We can adjust Algorithm \ref{dnalgorithm} to the $B_n$-case to prove Proposition \ref{basisbn}.

\begin{algorithm}\label{bnalgorithm}$~$
\begin{enumerate}
\item[(1)] Given a monomial $M$ in the generators for $U(\fr{yb}_{B_n})$ such that $\Gamma$ is connected on the set $\lbrace 1,\ldots, n\rbrace$, use antisymmetry relations to bring it into the form
\begin{align*}
M&=\pm\prod_i^p\ov{(a_i,b_i)}\prod_j^q\oov{(c_j,d_j)}\prod_k^sr^k.&
\end{align*}
\item[(2)] Use Algorithm \ref{dnalgorithm} to bring $M'=\prod_i^p\ov{(a_i,b_i)}\prod_j^q\oov{(c_j,d_j)}$ into the form of a basis element of $U(\fr{yb}_{D_n})$.
\item[(3)] Using the relations (\ref{quadreln6}), $M=0$ unless $s=0,1$. If $s=0$, we are done as $M=M'$ is a basis element of the form (\ref{reduceddn1})-(\ref{reduceddn3}) up to sign.
\item[(4)] If $s=1$, use the relations (\ref{quadreln6}) interchange all generators $\oov{(ij)}$ with the corresponding generators $\ov{(ij)}$. Using step (2), we can then bring $M$ into the form
\begin{align*}
M&=\pm \ov{(1,2)}\ov{(1,3)}\ldots\ov{(1,n)}r^k.
\end{align*}
Using (\ref{quadreln6}) again, we can replace $r^k$ by $r^1$. Hence, $M$ has the form $\pm$(\ref{reducedbn}) in this case.
\end{enumerate}
\end{algorithm}

We can give a total order on the generators of $U(\fr{yb}_{B_n})$ by ordering the $\ov{(ij)}$ (and $\oov{(ij)}$) lexicographically, setting $\ov{(a,b)}<\oov{(c,d)}$ for all indices, and $r^k>\oov{(c,d)}$ for all $k, c, d$. Where the $r^k$ are ordered $r^1<r^2< \ldots < r^n$.

Using this total order the corresponding set $\cT$ consists of the pairs from Lemma \ref{tfordn} and the additional pairs
\begin{align*}
\ov{(ij)}r^k,&\qquad \oov{(ij)}r^k,\qquad \ov{(ij)}r^i,&\text{for pairwise distinct indices $i,j, k$}
\end{align*}

\begin{theorem}
The basis of Proposition \ref{basisbn} is the PBW basis with respect to the above order on the generators. In particular, $U(\fr{yb}_{B_n})$ is a Koszul algebra.
\end{theorem}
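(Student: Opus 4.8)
The plan is to imitate the $D_n$ case step by step. Recall first the general fact that, once a quadratic algebra is presented with an ordered generating set and each relation is rewritten so that its leading monomial is a product of two generators, the monomials none of whose consecutive factors is such a leading product --- equivalently, the monomials all of whose consecutive pairs lie in the set $\cT$ described above --- always span the algebra, and the order is PBW precisely when they form a basis; a PBW presentation of $U(\fr{yb}_{B_n})^!$ then makes $U(\fr{yb}_{B_n})^!$, hence $U(\fr{yb}_{B_n})$ by quadratic duality, Koszul (\cite{Pri,BGS}). So the task is to identify the admissible monomials for the order of Proposition \ref{basisbn} with the reduced monomials of that proposition; since the reduced monomials are already known to be a vector space basis (Proposition \ref{basisbn}, via Algorithm \ref{bnalgorithm}), this identification proves the theorem, and Koszulness follows.

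I would first describe the shape of an admissible monomial. The pairs in $\cT$ involving an $r$-generator are $\ov{(ij)}r^k$ and $\oov{(ij)}r^k$ for pairwise distinct $i,j,k$, the pair $\ov{(ij)}r^i$ arising from (\ref{quadreln6}) (for $i<j$), and the pairs $r^k r^l$ with $k<l$; no pair starting with an $r$-generator lies in $\cT$, since $r^k x$ is reducible by antisymmetry. Hence an admissible monomial has the form $w\,r^{k_1}r^{k_2}\cdots r^{k_p}$ with $k_1<\cdots<k_p$, where $w$ is an admissible word in the $\ov{},\oov{}$-generators alone --- so, by the $D_n$ theorem, a reduced monomial of shape (\ref{reduceddn1})-(\ref{reduceddn3}) --- and the last letter of $w$, if any, is an $\ov{}$- or $\oov{}$-generator whose indices avoid $k_1$, or an $\ov{}$-generator having $k_1$ as its smaller index. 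Conversely, every reduced monomial of Proposition \ref{basisbn}, after reordering its letters by antisymmetry, is admissible: the pairs inside the $\ov{},\oov{}$-part were checked in the $D_n$ theorem, the pairs $\ov{(j,i_s)}\ov{(j,i_{s+1})}$ occurring in (\ref{reducedbn}) are of the type $\ov{(ab)}\ov{(ac)}$ with $a<b<c$, the terminal pair $\ov{(j,i_k)}r^j$ is of type $\ov{(ij)}r^i$, and $r$-letters coming from different connected components of the associated graph form a strictly increasing string. Matching the two lists is then the content of steps (2)-(4) of Algorithm \ref{bnalgorithm}, read as: decompose $w$ along connected components, absorb each $r^{k_m}$ into the component carrying the vertex $k_m$ --- using (\ref{quadreln6}) to replace the relevant $\oov{}$'s by $\ov{}$'s and to normalize $k_m$ to the minimal vertex of its component (two $r$-letters in the same component producing $0$ by (\ref{quadreln6})) --- and keep the $r$-vertices lying in no edge as the shape-(\ref{reducedbn}) factors with empty $\ov{}$-part.

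The main obstacle will be the bookkeeping in this matching: one must check that rewriting an already-reduced $\ov{},\oov{}$-word followed by a string of $r$'s via antisymmetry and (\ref{quadreln6}) is confluent and terminates in a single reduced monomial up to sign, so that distinct admissible monomials go to distinct reduced ones and the number of admissible monomials in each degree equals the dimension of $U(\fr{yb}_{B_n})^!$ in that degree; equivalently, in Diamond-Lemma terms, one verifies that the new degree-three overlaps --- those in which at least one of the two reducible pairs involves an $r$-generator --- all resolve, the overlaps among $\ov{},\oov{}$-generators being handled by the $D_n$ theorem. With that bookkeeping done, the identification of admissible with reduced monomials, and hence the PBW and Koszulness statements, follow by the same finite verification as in the $D_n$ case, using Lemma \ref{tfordn} together with the additional $\cT$-pairs listed above.
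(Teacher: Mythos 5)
Your overall strategy---admissible monomials always span, so it suffices to match them bijectively with the reduced monomials of Proposition \ref{basisbn}, whose linear independence is taken as known, and then invoke PBW $\Rightarrow$ Koszul---is the right one and is evidently what the paper intends (it gives no proof for this theorem). The gap is that the matching you describe does not exist, and the verification you defer to the last paragraph actually fails. Your own parenthetical remark that ``two $r$-letters in the same component produce $0$'' already exhibits the problem: it produces admissible monomials that vanish. Concretely, take $M=\ov{(12)}\,r^1r^2$. Both consecutive pairs lie in $\cT$: $\ov{(12)}r^1$ is the pair $\ov{(ij)}r^i$ from the paper's list, and $r^1r^2$ must belong to $\cT$ (as you correctly add, the paper omits it) because it is a nonzero degree-$2$ basis element admitting no rewriting. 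Yet $M=0$ in $U(\fr{yb}_{B_n})^!$, since relation (\ref{quadreln6}) together with antisymmetry gives $\ov{(12)}r^1=\ov{(12)}r^2$, whence $\ov{(12)}r^1r^2=\ov{(12)}r^2r^2=0$. Likewise $\ov{(12)}\ov{(13)}r^1$ and $\ov{(12)}\ov{(13)}r^2$ are both admissible under your (and the paper's) description of $\cT$, and they are equal in the algebra: $\ov{(12)}\ov{(13)}r^2=-\ov{(13)}\ov{(12)}r^2=-\ov{(13)}\ov{(12)}r^1=\ov{(12)}\ov{(13)}r^1$. So the admissible monomials are linearly dependent already in degree $3$; they cannot coincide with a basis, and the stated order is not a PBW order. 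The degree-three overlap resolution you flag as ``the main obstacle'' genuinely does not go through, so the theorem cannot be proved along this route. (Separately, your sentence ``no pair starting with an $r$-generator lies in $\cT$'' contradicts your inclusion of $r^kr^l$; presumably you meant pairs of the form $r^kx$ with $x$ a transposition generator.)

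The source of the trouble is that the relation $r^i\ov{(ij)}=r^j\ov{(ij)}=r^i\oov{(ij)}=r^j\oov{(ij)}$ lets an $r$-generator change its index whenever it sits next to an edge containing that index, but in a longer admissible word the $r$ may be separated from that edge by other letters; anticommuting it across them produces a reduction that consecutive-pair inspection cannot see. The same obstruction appears for the other natural orders (for instance placing the $r^k$ first and smallest, where $r^1\ov{(12)}\ov{(13)}$ and $r^3\ov{(12)}\ov{(13)}$ are both admissible and equal), so this is not bookkeeping that a reordering obviously repairs. To rescue the Koszulness claim one would need either an order for which the degree-three overlaps genuinely resolve, or a non-PBW argument (distributivity of the relevant lattice of subspaces, or an explicit Koszul resolution); one should also revisit Proposition \ref{basisbn} itself, since Algorithm \ref{bnalgorithm} only establishes that the reduced monomials span, not that they are independent.
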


\begin{example}
We can now compute the Hilbert polynomial for $U(\fr{yb}_{B_n})$ and its quadratic dual using the PBW basis. For $n=4$, it is
\begin{align*}
P^!_{B_4}(t)&=1+72t+51t^2+5t^3,&\\
P_{B_4}(t)&=1+72t+5133t^2+365909t^3+26084025t^4+1859414106 t^5=O(t^6).&
\end{align*}
\end{example}

\subsection{Consequences}

Koszulness of the algebras $U(\fr{yb}_G)$ enables us to compute the extension algebras explicitly. If is well known that for a Koszul algebra $U$,
\begin{align*}
\Ext^*_U(k,k)=\bigoplus_{l,m\geq o}\Ext^{l,m}_U(k,k)=\bigoplus_{l\geq 0}\Ext^l_U(k,k)\cong U^!,
\end{align*}
which is thus generated by $\Ext^1_U(k,k)$. A free resolution for $U$ is given by the \emph{Koszul complex} \cite{Pri,BGS}. The module categories can be related by \emph{Koszul duality} \cite[Theorem 1.2.6]{BGS} giving an equivalence of triangulated categories between the bounded derived categories of finite-dimensional graded modules over $U$ and $U^!$. The latter is a finite-dimensional PI algebra.


\section{Embeddings of Heisenberg Doubles of BEER-Algebras and Functors of Rational Cherednik Algebra Representations}\label{sect2}

In this section, we use the notion of perfect subquotient from \cite{BB} to obtain maps between Heisenberg doubles of the BEER-algebras $U(\fr{yb}_G)$ for different $G$ of the classical series $A_n$, $B_n$, $D_n$. These can be used to construct functors between categories of representations of rational Cherednik algebras. To ensure irreducibility, we restrict to $n\geq 3$ in the $D_n$-case for the applications to rational Cherednik algebras.

\subsection{Maps of Heisenberg Doubles of Different Series}\label{differentseriesmaps}

Note that $A_{n-1}=S_n$ is both a subgroups (and quotient) $D_n$ (and $B_n$) of  using the group homomorphisms
\begin{align}
S_n&\stackrel{j_A^D}{\hooklongrightarrow}D_n\stackrel{p_{A}^D}{\twoheadlongrightarrow}S_n,&\\
S_n&\stackrel{j_A^B}{\hooklongrightarrow}B_n\stackrel{p_{A}^B}{\twoheadlongrightarrow}S_n.&
\end{align}
These morphisms supply induction functors
\begin{align}
\begin{split}\Ind_A^D\colon \leftexpsub{S_n}{S_n}{\mathcal{YD}}&\longrightarrow \leftexpsub{D_n}{D_n}{\mathcal{YD}},\\
(V,\triangleright, \delta)&\longmapsto (V,\triangleright(p_A^D\otimes \ide_V),(j_A^D\otimes\ide_V)\delta), \end{split}\\
\begin{split}\Ind_A^B\colon \leftexpsub{S_n}{S_n}{\mathcal{YD}}&\longrightarrow \leftexpsub{B_n}{B_n}{\mathcal{YD}},\\
(V,\triangleright, \delta)&\longmapsto (V,\triangleright(p_A^B\otimes \ide_V),(j_A^B\otimes\ide_V)\delta).\end{split}
\end{align}
Via reconstruction theory, this gives rise to maps
\begin{align*}
d_A^D\colon \Drin(S_n)&\longrightarrow \Drin(D_n),&\\
d_A^B\colon \Drin(S_n)&\longrightarrow \Drin(B_n).&
\end{align*}
Comparing generators gives rise to maps
\begin{align}
\Ind_A^D Y_{S_n}&\stackrel{\iota_A^D}{\hooklongrightarrow} Y_{D_n}\stackrel{\pi_A^D}{\twoheadlongrightarrow} \Ind_A^D Y_{S_n}.&\\
\Ind_A^B Y_{S_n}&\stackrel{\iota_A^B}{\hooklongrightarrow} Y_{B_n}\stackrel{\pi_A^B}{\twoheadlongrightarrow}\Ind_A^BY_{S_n},&
\end{align}
of YD-modules over $D_n$ (respectively, $B_n$).
These pairs of maps are \emph{perfect subquotients} in the terminology of \cite{BB}. Such perfect subquotients are used to induce maps between the corresponding braided Heisenberg doubles of the braided Hopf algebras $U(\fr{yb}_G)$ (as YD-modules over $D_n$, respectively $B_n$).

\begin{lemma}\label{subquotients}
The pair $(\iota_A^D, \pi_A^D)$ is a perfect subquotient of YD-modules over $D_n$, and $(\iota_A^B,\pi^B_A)$ gives a perfect subquotients of YD-modules over $B_n$.
\end{lemma}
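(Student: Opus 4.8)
The plan is to verify the two defining axioms of a perfect subquotient from \cite{BB} for each pair. Recall that a pair $(\iota, \pi)$ consisting of a mono $\iota\colon W \hooklongrightarrow Y$ and an epi $\pi\colon Y \twoheadrightarrow W$ in the category of YD-modules is a perfect subquotient when $\pi \iota = \ide_W$ (so $W$ is a retract of $Y$, splitting $Y \cong W \oplus \ker\pi$ with $\ker\pi = (\img\iota)^{\perp}$ in the appropriate sense), together with the compatibility of the braiding: the braiding $\Psi_Y$ restricted to $\iota(W)\otimes \iota(W)$ agrees with $\iota\otimes\iota$ applied to $\Psi_W$, and dually for $\pi$. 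First I would make the maps $\iota_A^D$ and $\pi_A^D$ explicit on generators: $\iota_A^D(\un{(ij)}) = \un{(ij)} \in Y_{D_n}$ (the generator attached to the root $e_i - e_j$), and $\pi_A^D$ sends $\un{(ij)} \mapsto \un{(ij)}$ and $\uun{(ij)} \mapsto \un{(ij)}$ (equivalently, $\pi_A^D$ is dual to the inclusion of reflections $(ij)\in S_n \hookrightarrow D_n$ composed with the quotient). The relation $\pi_A^D \iota_A^D = \ide$ is then immediate, and similarly for the $B_n$ case where $\iota_A^B(\un{(ij)}) = \un{(ij)}$ and $\pi_A^B$ additionally kills the generators $r_k$.

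Next I would check that these are morphisms of YD-modules over $D_n$ (respectively $B_n$), which amounts to checking equivariance for the action and coaction. For $\iota_A^D$: the $D_n$-action on the image is pulled back along $p_A^D\colon D_n \twoheadrightarrow S_n$, so a sign-changing generator $s_k s_l \sigma$ acts via $\sigma$, and one reads off from \eqref{dnmodulestructure} that on the sub-YD-module spanned by the $\un{(ij)}$ this indeed matches the $S_n$-action on $Y_{S_n}$; the coaction lands in $kS_n \subseteq kD_n$ via $j_A^D$, matching $\delta(\un{(ij)}) = (ij)\otimes\un{(ij)}$. For $\pi_A^D$, the key point is that the assignment $\uun{(ij)}\mapsto \un{(ij)}$ is $D_n$-equivariant: comparing the third/fourth cases of \eqref{dnmodulestructure} with the first/second shows that $s_k s_l\sigma \triangleright \uun{(ij)}$ and $s_k s_l \sigma \triangleright \un{(ij)}$ map to the same element of $\Ind_A^D Y_{S_n}$ after applying $\pi$ (the signs match up since both acquire the same $\pm1$). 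The $B_n$ case adds only the check that $r_k\mapsto 0$ is equivariant, which holds because $s_k\triangleright\un{(ij)}\in\{\un{(ij)},\uun{(ij)}\}$ both map to $\un{(ij)}$, and $(ij)\triangleright r_k \in \{r_k, r_i, r_j\}$ all lie in $\ker\pi_A^B$.

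The last and most delicate step is the braiding compatibility. Since the braiding on a YD-module is $\Psi(u\otimes v) = (u_{(-1)}\triangleright v)\otimes u_{(0)}$, and on $Y_G$ the coaction of $\un{(ij)}$ is by the group element $(ij)$, one has $\Psi(\un{(ij)}\otimes\un{(kl)}) = ((ij)\triangleright\un{(kl)})\otimes\un{(ij)}$. For the sub-object: restricted to the span of the $\un{(ij)}$ in $Y_{D_n}$, the element $(ij)\in D_n$ acts exactly as the transposition in $S_n$, so $\Psi_{Y_{D_n}}$ preserves $\iota(W)\otimes\iota(W)$ and agrees there with $\iota\otimes\iota$ composed with $\Psi_{Y_{S_n}}$ — this is precisely the observation already made in the proof of Lemma \ref{iquadbn} that this subspace "is closed under $\Psi$ and isomorphic to $Y_{D_n}\otimes Y_{D_n}$", applied one level down. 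For the quotient side, I would check that $\ker\pi_A^D$ (spanned by the differences $\un{(ij)} - \uun{(ij)}$ together with, in the $B_n$ case, the $r_k$) is a $\Psi$-subcomodule in the sense required, so that $\Psi$ descends to $W\otimes W$ and the descended map is $\Psi_{Y_{S_n}}$; this follows from the same equivariance computations as above together with the explicit braiding formulas from Lemmas \ref{iquaddn} and \ref{iquadbn}. I expect this braiding-descent verification for $\pi$ to be the main obstacle, since it requires tracking signs carefully through the case distinctions in \eqref{dnmodulestructure}; everything else is a direct unwinding of definitions.
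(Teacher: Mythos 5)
Your proposal is built on a guessed definition of ``perfect subquotient'' that does not match the one from \cite{BB} which the paper actually verifies, and as a result the condition that makes the subquotient \emph{perfect} is never addressed. In \cite{BB} (and in the paper's proof) the pair $(\iota_A^D,\pi_A^D)$ is a \emph{subquotient} when the single coaction identity $(\ide\otimes \pi_A^D)\delta_{D_n}\iota_A^D=(j_A^D\otimes \ide)\delta_{S_n}$ holds --- a one-line check, since $\delta_{D_n}(\un{(ij)})=(ij)\otimes\un{(ij)}$ --- and it is \emph{perfect} when the maximal triangular ideals satisfy
$I(\Ind_A^D Y_{S_n}, (j_A^D\otimes\ide)\delta_{S_n})=T(\iota_A^D)^{-1}I(Y_{D_n},\delta_{D_n})$,
i.e.\ the relations satisfied by the generators $\un{(ij)}$ inside the triangular part over $D_n$ are exactly those of type $A$, no more and no fewer. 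The paper settles this by identifying the right-hand side with $T(Y_{S_n})\cap I(Y_{D_n},\delta_{D_n})$ and comparing relations via Lemma \ref{iquaddn}. Your axioms ($\pi\iota=\ide$ plus compatibility of braidings for both $\iota$ and $\pi$) are not the definition, and your proof never states or checks the ideal equality.

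That said, the gap is one of logic rather than of computation: the facts you do verify are correct and are in fact strong enough to recover perfectness, but you would need to add the bridge explicitly. Concretely, once you know $\iota_A^D$ is an injective map of braided vector spaces (the span of the $\un{(ij)}$ in $Y_{D_n}$ is $\Psi$-stable and carries the type-$A$ braiding, as you observe), the quantum symmetrizers intertwine, $\iota^{\otimes m}\circ\fr{S}_m^{S_n}=\fr{S}_m^{D_n}\circ\iota^{\otimes m}$, and injectivity gives $\ker\fr{S}_m^{S_n}=(\iota^{\otimes m})^{-1}\ker\fr{S}_m^{D_n}$ in every degree $m$, which is precisely the statement about maximal triangular ideals. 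Without this step your argument only controls the quadratic part of the relations, whereas perfectness is a condition in all degrees $\geq 2$. A second, smaller point: the ``braiding-descent for $\pi$'' verification you flag as the main obstacle is not required by the definition --- the subquotient condition constrains only the coaction composed with $\pi\iota$, and the perfectness condition is phrased entirely through $\iota$ --- so that effort is better redirected to the ideal comparison above.
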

\begin{proof}
Consider the pair $(\iota_A^D, \pi_A^D)$. To show it is a subquotient, we need $(\ide\otimes \pi_A^D)\delta_{D_n}\iota_A^D=(j_A^D\otimes \ide)\delta_{S_n}$, where $\delta_{G}$ denotes the $G$-coaction on $Y_{G}$. But this is clear as $\delta_{D_n}(\un{(ij)})=(ij)\otimes \un{(ij)}=\delta_{S_n}(\un{(ij)})$.

Next, the check the subquotients are \emph{perfect}, we need for the maximal triangular ideals\footnote{For a YD-module $(V,\delta)$ over $H$, the maximal triangular ideal $I(V,\delta)$ is the maximal homogeoneous ideals in degree $\geq 2$ which is also a coideal.} (cf. \cite{BB}) that
\begin{align*}
I(\Ind_A^D Y_{S_n}, (j_A^D\otimes \ide)\delta_{S_n})&=T(\iota_A^D)^{-1}I(Y_{D_n},\delta_{D_n}).&
\end{align*}
The right hand side can be identified with $T(Y_{S_n})\cap I(Y_{D_n},\delta_{D_n})$. Then it is easy to see using Lemma \ref{iquaddn} that the relations among the generators $\un{(ij)}$ are the same in $Y_{D_n}$ as in $Y_{S_n}$ which implies the equality. The proof for $Y_{B_n}$ is analogous.
\end{proof}

We remark that the braided Heisenberg double from \cite{BB} is defined on $U(\fr{yb}_G^*)\otimes kG\otimes U(\fr{yb}_G)$, and the pairing between $U(\fr{yb}_G^*)$ and $U(\fr{yb}_G)$ is \emph{not} perfect. Note that as the Drinfeld doubles are quasitriangular, we have a map $\Drin(G)\to \leftexpsub{\Drin(G)}{\Drin(G)}{\mathcal{YD}}$ using the universal $R$-matrix $\sum_{g\in G}{\delta_g\otimes g}$. We will use this to compute the Heisenberg double of $U(\fr{yb}_G)$ over $\Drin(G)$. We will also compute the Heisenberg double over $\mC G$. The version over $\mC G$ is for the purpose of Section \ref{heisenbergmaps}, while the version over $\Drin(G)$ is relevant for \cite{Lau2}.

We use the notations $\ov{(ij)}$, $\oov{(ij)}$ for generators of $U(\fr{yb}_{G_n})$ for convenience even though this notation was already used for the quadratic dual.

\begin{corollary}\label{heisenbergmaps}
There exist injective algebra morphisms
\begin{align*}
\phi_A^D\colon \Heis_{\Drin(S_{n})}(U(\fr{tr}_{n}))&\hooklongrightarrow \Heis_{\Drin(D_n)}(U(\fr{yb}_{D_n})),&\\
\phi_A^B\colon \Heis_{\Drin(S_{n})}(U(\fr{tr}_{n}))&\hooklongrightarrow \Heis_{\Drin(B_n)}(U(\fr{yb}_{B_n})),&\\
\phi_A^D\colon \Heis_{\mC S_{n}}(U(\fr{tr}_{n}))&\hooklongrightarrow \Heis_{\mC D_n}(U(\fr{yb}_{D_n})),&\\
\phi_A^B\colon \Heis_{\mC S_{n}}(U(\fr{tr}_{n}))&\hooklongrightarrow \Heis_{\mC B_n}(U(\fr{yb}_{B_n})).&
\end{align*}
\end{corollary}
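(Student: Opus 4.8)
The plan is to deduce Corollary~\ref{heisenbergmaps} directly from Lemma~\ref{subquotients} by invoking the machinery of perfect subquotients from \cite{BB}: a perfect subquotient $(\iota,\pi)$ of YD-modules induces a morphism of the associated maximal triangular quotient algebras, and this in turn functorially induces an injective algebra morphism on the level of braided Heisenberg doubles. Since Lemma~\ref{subquotients} already establishes that $(\iota_A^D,\pi_A^D)$ and $(\iota_A^B,\pi_A^B)$ are perfect subquotients of YD-modules over $D_n$ (respectively $B_n$), the existence of the maps $\phi_A^D$ and $\phi_A^B$ will follow once we recall precisely the relevant construction from \cite{BB} and check that it applies verbatim in our situation.

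First I would recall the definition of the braided Heisenberg double $\Heis_H(U)$ from \cite{BB} as the algebra structure on $U^* \otimes kH \otimes U$ (where $H = \Drin(G)$ or $\mathbb{C}G$ via the quasitriangular structure), together with the statement from \cite{BB} that a perfect subquotient $(\iota,\pi)$ of YD-modules $Y' \rightarrowtail Y \twoheadrightarrow Y'$ yields, passing to the maximal triangular quotients $\cB^{\Lambda}(Y') = U(\fr{yb}_{G'})$ and $\cB^{\Lambda}(Y) = U(\fr{yb}_G)$, both an algebra embedding $U(\fr{yb}_{G'}) \hookrightarrow U(\fr{yb}_G)$ (from $\iota$) and an algebra surjection $U(\fr{yb}_G) \twoheadrightarrow U(\fr{yb}_{G'})$ (from $\pi$), compatibly paired. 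Then I would state the functoriality result of \cite{BB}: these data assemble into an injective algebra morphism $\Heis_{H'}(U(\fr{yb}_{G'})) \hookrightarrow \Heis_H(U(\fr{yb}_G))$, built on the triple tensor factors by using $T(\pi)$ on the dual factor $U(\fr{yb}_{G'})^* \to U(\fr{yb}_G)^*$ (dual to the surjection), the group map $d_A^D$ (resp.\ $d_A^B$) or its $\mathbb{C}G$-analogue on the middle factor, and $T(\iota)$ on the last factor. Applying this to the two perfect subquotients of Lemma~\ref{subquotients}, once with $H = \Drin(-)$ and once with $H = \mathbb{C}(-)$, produces all four claimed morphisms.

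The one point that needs genuine verification, rather than bookkeeping, is that the group-level maps used in the middle tensor factor are the correct ones and that the whole construction descends to $\mathbb{C}G$ as well as to $\Drin(G)$. For the $\Drin(G)$ version this is immediate from \cite{BB}, since the perfect subquotient is of $\Drin(G)$-YD-modules and the induced map $d_A^D\colon \Drin(S_n) \to \Drin(D_n)$ (resp.\ $d_A^B$) was already recorded above. For the $\mathbb{C}G$ version, I would observe that the quasitriangular structure $\sum_{g} \delta_g \otimes g$ gives a map $\Drin(G) \to \leftexpsub{\Drin(G)}{\Drin(G)}{\mathcal{YD}}$ under which $Y_G$ is a module, and that the group homomorphisms $j_A^D, p_A^D$ (resp.\ $j_A^B, p_A^B$) restrict to give all the compatibility data needed for the Heisenberg double over the group algebra; the relations in Lemma~\ref{iquaddn}, Lemma~\ref{ibn} match across the inclusion exactly as in the proof of Lemma~\ref{subquotients}, so the algebra maps on $U(\fr{tr}_n)$ and its dual are as stated.

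The main obstacle I anticipate is not conceptual but one of \emph{citation precision}: the construction of Heisenberg-double morphisms from perfect subquotients is genuinely the content of \cite{BB}, so the proof essentially amounts to ``this is \cite[Chapter~7]{BB} applied to Lemma~\ref{subquotients},'' and the real work is to identify the exact statement there and confirm that injectivity of $\phi_A^D$, $\phi_A^B$ is part of its conclusion (injectivity ultimately comes from injectivity of $T(\iota_A^D)$ together with the fact that, on the dual and group factors, one is using honest maps between the relevant algebras). Thus the proof I would write is short: recall the perfect-subquotient $\Rightarrow$ Heisenberg-double-map construction of \cite{BB}, cite Lemma~\ref{subquotients}, note that both the $\Drin$ and $\mathbb{C}G$ variants are covered, and conclude.
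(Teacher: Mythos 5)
Your proposal is correct and follows essentially the same route as the paper: both apply the perfect-subquotient machinery of \cite{BB} to Lemma~\ref{subquotients} to get a map of Heisenberg doubles over the larger group's (Drinfeld double of the) group algebra, pre-compose with the induced map on the middle tensor factor ($d_A^D$, resp.\ $j_A^D$ for the $\mC G$ version), and read off injectivity from the explicit description on generators, group elements, and dual generators.
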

\begin{proof}
The existence of a morphism of algebra $\Heis_{\Drin(D_n)}(U(\fr{tr}_{n})) \rightarrow \Heis_{\Drin(D_n)}(U(\fr{yb}_{D_n}))$ follows by applying the general theory of perfect subquotients in \cite{BB} to the ones constructed in Lemma \ref{subquotients}. The algebra morphism stated is obtained by pre-composing with the algebra morphism $\ide_{U(\fr{tr}_{n})}\otimes d_A^D\otimes \ide_{U(\fr{tr}_{n}^*)}$. The construction for $B_n$ is the same using $d_A^B$. Concretely, the morphism is given by mapping $\un{(ij)} \in U(\fr{tr}_{n})$ to $\un{(ij)} \in U(\fr{yb}_{D_n})$ (and mapping the dual generators $\ov{(ij)} \in U(\fr{tr}_n^*)$ to $\ov{(ij)}\in U(\fr{yb}_{D_n}^*)$), the group element $(ij)\in S_n$ maps to $(ij)\in D_n$, and the function $\delta_{(ij)} \in \mC[S_{n}]$ maps to $\delta_{(ij)}={p_A^D}^*\delta_{(ij)} \in \mC[D_n]$. Hence we see that $\phi_A^D$ is injective. The proof for $B_n$ in place of $D_n$ is again analogous. 

Both morphisms $\phi_A^D$ and $\phi_A^B$ admit versions over $\mC G$ if we regard $Y_G$ as YD-modules over $G$ rather than its Drinfeld double. This first gives a morphism $\Heis_{\mC D_n}(U(\fr{tr}_{n})) \rightarrow \Heis_{\mC D_n}(U(\fr{yb}_{D_n}))$ which will give the desired morphism by pre-composition with $\ide_{U(\fr{tr}_{n})}\otimes j_A^D\otimes \ide_{U(\fr{tr}_{n}^*)}$. 
\end{proof}

We can also use the subquotient result \ref{subquotients} to obtain morphisms of Hopf algebras between the corresponding braided Drinfeld doubles (or \emph{double bosonizations} in \cite{Maj2}). These will be relevant in \cite{Lau2} in order to provide categorical actions of the monoidal category of modules over the braided Drinfeld double of type $A_{n-1}$ on modules over rational Cherednik algebras of different types.
\begin{align*}
\phi_A^D\colon \Drin_{\Drin(S_{n})}(U(\fr{tr}_{n}))&\hooklongrightarrow \Drin_{\Drin(D_n)}(U(\fr{yb}_{D_n})),&\\
\phi_A^B\colon \Drin_{\Drin(S_{n})}(U(\fr{tr}_{n}))&\hooklongrightarrow \Drin_{\Drin(B_n)}(U(\fr{yb}_{B_n})).&
\end{align*}
Note that the unlike the braided Heisenberg double, the braided Drinfeld double of $U(\fr{yb}_G)$ \emph{cannot} be computed over $\mC G$.

\subsection{Application to Rational Cherednik Algebras}\label{applications}

Note first that in the cases $A_n$, $D_n$ and $B_n$, all reflections are conjugate. Therefore, the parameter $c=(c_s)_{s\in \cS}$ is just a constant. In the following, we can also consider different constants for the functors obtained in the $t=0$ case as all rational Cherednik algebras embed into the same Heisenberg double.

The maps from Lemma~\ref{subquotients}
\begin{align*}
\phi_A^D\colon \Heis_{\mC S_{n}}(U(\fr{tr}_{n}))&\hooklongrightarrow \Heis_{\mC D_n}(U(\fr{yb}_{D_n})),&\\
\phi_A^B\colon \Heis_{\mC S_{n}}(U(\fr{tr}_{n}))&\hooklongrightarrow \Heis_{\mC B_n}(U(\fr{yb}_{B_n})),&
\end{align*}
can be combined with the maps from \cite{BB},
\begin{align*}
M_c(G)\colon H_{0,c}(G)&\longrightarrow \Heis_{\mC G}(U(\fr{yb}_G)).&
\end{align*}
These induce functors
\begin{align*}
\op{HInd}_{A}^{D}\colon \lmod{H_{0,c}(S_n)}&\longrightarrow \lmod{H_{0,c}(D_n)},&\\
\op{HInd}_A^{B}\colon \lmod{H_{0,c}(S_n)}&\longrightarrow \lmod{H_{0,c}(B_n)}.&
\end{align*}
The functors are given by
\begin{align*}
\op{HInd}_{A}^{D}&=M_c(D_n)^*{\phi_A^D}_*M_c(S_n)_*,&\\
\op{HInd}_A^{B}&=M_c(B_n)^*{\phi_A^B}_*M_c(S_n)_*,
\end{align*}
where $(-)_*$ is the extension functor $\Heis_{\mC G}(U(\fr{yb}_G))\otimes_{H_{t,c}(S_n)}(-)$ (pushforward), and $(-)^*$ is the restriction functor (pullback). Note that the functor $\op{HInd}_A^D$ is given by the $H_{0,c}(D_n)$-$H_{0,c}(S_n)$-bimodule $\Heis_{\mC D_n}(U(\fr{yb}_{D_n}))$ (and $\op{HInd}_A^B$ is given by the bimodule using $B_n$).

We can also consider the functors
\begin{align*}
\op{HInd}_{D}^{A}\colon \lmod{H_{0,c}(D_n)}&\longrightarrow \lmod{H_{0,c}(S_n)},&\\
\op{HInd}_B^{A}\colon \lmod{H_{0,c}(B_n)}&\longrightarrow \lmod{H_{0,c}(S_n)}.&
\end{align*}
These functors are given by tensoring with $\Heis_{\mC D_n}(U(\fr{yb}_{D_n}))$ viewed as a $H_{0,c}(S_n)$-$H_{0,c}(D_n)$-bimodule for type $D_n$ (and using instead $B_n$ of $D_n$ for type $B$). By construction, the induction functors have natural right adjoints (using the Tensor-Hom adjunction) which we denote by $\op{HRes}_A^D, \op{HRes}_{D}^A$, (or using $B$ instead of $D$). For example, the functor $\op{HRes}_A^D \colon \lmod{H_{0,c}(D_n)} \longrightarrow \lmod{H_{0,c}(S_n)}$ is given by mapping an object $V$ to the $H_{0,c}(D_n)$-module $\Hom_{H_{0,c}(S_n)}(\Heis_{\mC D_n}(U(\fr{yb}_{D_n}),V)$. The module structure is given by $(a\triangleright f)(b)=f(a\triangleright b)$.

\begin{corollary}
All versions of the functors $\op{HInd}$ have right adjoints and hence preserve colimits, and in particular, are right exact.
\end{corollary}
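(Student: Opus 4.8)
The plan is to reduce every version of $\op{HInd}$ to a single functor of ``tensor with a bimodule'' type and then invoke the tensor-Hom adjunction. As already noted in the discussion preceding the statement, each functor $\op{HInd}$ (in every type, and for every inclusion of rank) is, up to natural isomorphism, a functor $N\otimes_R(-)\colon\lmod{R}\to\lmod{S}$ for a suitable $S$-$R$-bimodule $N$: for $\op{HInd}_A^D$ one takes $N=\Heis_{\mC D_n}(U(\fr{yb}_{D_n}))$ with its $H_{0,c}(D_n)$-$H_{0,c}(S_n)$-bimodule structure coming from the algebra maps $M_c(D_n)$ and $\phi_A^D\circ M_c(S_n)$, and analogously for $\op{HInd}_A^B$, $\op{HInd}_D^A$, $\op{HInd}_B^A$, $\op{HInd}_n^{n+1}$ and $\op{HInd}_{n+1}^n$, now using $\tau_n^n$ (resp.\ $M_c$) in place of $\phi_A^D$. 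The one point to check here is that the composite of an extension-of-scalars functor, a second extension-of-scalars functor, and a restriction functor is naturally isomorphic to $N\otimes_R(-)$ for the bimodule just described; this is the routine associativity computation $S\otimes_{S'}(S'\otimes_R M)\cong S\otimes_R M$ together with $S\otimes_S(-)\cong\op{Res}$.

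Granting this, the tensor-Hom adjunction shows that $N\otimes_R(-)$ is left adjoint to $\Hom_S(N,-)\colon\lmod{S}\to\lmod{R}$, and this right adjoint is precisely the functor $\op{HRes}$ attached to the same bimodule, as spelled out in Section~\ref{applications}. (Equivalently, without assembling a single bimodule: extension of scalars is left adjoint to restriction, and restriction is itself left adjoint to coinduction, so each $\op{HInd}$ is a composite of left adjoints and hence again a left adjoint.) A functor that admits a right adjoint preserves every colimit that exists in its source; since module categories are cocomplete, every $\op{HInd}$ preserves arbitrary colimits, in particular cokernels and finite coproducts, and hence, being additive, is right exact. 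This proves the corollary.

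The argument is entirely formal, so I do not expect a genuine obstacle; the only place that warrants care is the bookkeeping. One must make sure that $M_c$ and $\phi_A^D$ (resp.\ $\phi_A^B$, $\tau_n^n$) are honest algebra homomorphisms, so that the asserted $S$-$R$-bimodule structures on $N$ genuinely exist and the adjunction comes out on the correct side — but this has already been established in Corollary~\ref{heisenbergmaps} and in \cite{BB}.
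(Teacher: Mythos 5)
Your proof is correct and follows essentially the same route as the paper: each $\op{HInd}$ is realized as tensoring with a Heisenberg-double bimodule, so the tensor-Hom adjunction supplies the right adjoint $\op{HRes}$, and preservation of colimits (hence right exactness) is formal. Nothing further is needed.
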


It is possible to obtain versions of the above functors for the case $t\neq 0$ and for the restricted rational Cherednik algebras, again using the embeddings of \cite{BB} and the observations about subquotients. In the case $t\neq 0$, the bimodules are given by twisted Heisenberg doubles of $U(\fr{yb}_G)$. Caution about the parameters is however in order in these cases. For the restricted versions, the actual Nichols algebras $\cB(Y_G)$ are used instead of $U(\fr{yb}_G)$.

\subsection{Maps of Heisenberg Doubles of the Same Series}\label{sameseriesmaps}

We can also consider the map
\begin{align*}
\tau_n^{n+1}\colon\Heis_{\mC S_n}(U(\fr{tr}_n))&\longrightarrow \Heis_{\mC S_{n+1}}(U(\fr{tr}_{n+1})),&
\end{align*}
which maps $S_n\hookrightarrow S_{n+1}$ by acting on $\lbrace 1,\ldots, n\rbrace$, and maps $\un{(ij)} \mapsto \un{(ij)}$, $\ov{(ij)} \mapsto \ov{(ij)}$. Note that this map is an injective morphism of algebras. Similarly, such maps exist for types $B_n$, $D_n$. Again, it is also possible to give morphisms of Drinfeld doubles
\begin{align*}
\tau_n^{n+1}\colon\Drin_{\Drin(S_n)}(U(\fr{tr}_n))&\longrightarrow \Drin_{\Drin(S_{n+1})}(U(\fr{tr}_{n+1})).&
\end{align*}

As in Section \ref{applications}, we can combine the maps $\tau_n^{n+1}$ with the maps $M_c(G)$ to obtain functors
\begin{align*}
\op{HInd}_n^{n+1}\colon \lmod{H_{0,c}(G_n)}&\longrightarrow \lmod{H_{0,c}(G_{n+1})},&
\op{HInd}_{n+1}^{n}\colon \lmod{H_{0,c}(G_{n+1})}&\longrightarrow \lmod{H_{0,c}(G_{n})},&
\end{align*}
where $G_n$ is $S_n$, $B_n$ or $D_n$. The induction functor $\op{HInd}_n^{n+1}$ is defined by considering the algebra $\Heis_{\mC G_{n+1}}(U(\fr{yb}_{G_{n+1}}))$ as a $H_{0,c}(G_{n+1})$-$H_{0,c}(G_n)$-bimodule. The left $H_{0,c}(G_{n+1})$-action is given by restriction along the morphism $M_c(G_{n+1})$, where the right $H_{0,c}(G_n)$-structure is given by restriction along the composite $\tau_n^{n+1}M_c(G_n)$. Similarly, the functor $\op{HInd}_{n+1}^n$ is given by tensoring with the $H_{0,c}(G_{n})$-$H_{0,c}(G_{n+1})$-bimodule $\Heis_{\mC G_{n+1}}(U(\fr{yb}_{G_{n+1}}))$.


As before, we can define right adjoints, denoted by $\op{HRes_n^{n+1}}$, $\op{HRes}_{n+1}^n$ which imply that the induction functors are right exact.

\subsection{A More Complete Picture}

A more general method to obtain morphisms between the braided Heisenberg doubles of algebras $U(\fr{yb}_G)$ for finite Coxeter groups can be given by comparing Dynkin diagrams. If one Dynkin diagram embeds into another one, then the BEER-algebra of the smaller one will be perfect subquotient of the BEER-algebra of the group corresponding to the larger Dynkin diagram. Hence, maps between their braided Heisenberg doubles can be given, so that restriction and induction functors can be discussed as done above. For example, the diagram of type $A_{n-1}$ embeds into both the diagram of type $D_n$ and $B_n$ giving rise to the morphisms in Section \ref{differentseriesmaps}. There are also embeddings of the Dynkin diagrams of smaller rank into the Dynkin diagrams of larger rank for the classical series, giving the morphisms of braided Heisenberg doubles in Section \ref{sameseriesmaps}. Analogues of such morphisms can also be defined for types $B_n$ and $D_n$.


\bibliography{biblio}
\bibliographystyle{amsalpha}
\phantomsection

\end{document}